\definecolor{crimson}{rgb}{0.86, 0.08, 0.24}
\definecolor{bleudefrance}{rgb}{0.19, 0.5, 0.91}
\theoremstyle{plain}
\numberwithin{equation}{section} 
\newtheorem{theorem}{Theorem}[section]
\newtheorem{proposition}[theorem]{Proposition}
\newtheorem{lemma}[theorem]{Lemma}
\newtheorem{corollary}[theorem]{Corollary}
\newtheorem{question}[theorem]{Question}
\theoremstyle{remark}
\newtheorem{remark}[theorem]{Remark}
\renewcommand{\leq}{\leqslant}
\renewcommand{\geq}{\geqslant}
\newsavebox{\proofbox}
\savebox{\proofbox}{\begin{picture}(7,7)  \put(0,0){\framebox(7,7){}}\end{picture}}
\newcommand\E{\mathbb{E}}
\newcommand\Z{\mathbb{Z}}
\newcommand\R{\mathbb{R}}
\newcommand\p{\mathbb{P}}
\newcommand\N{\mathbb{N}}
\newcommand\inte{\operatorname{int}}
\newcommand\Isom{\operatorname{Isom}}
\newcommand\Lip{\operatorname{Lip}}
\newcommand\supp{\operatorname{supp}}
\renewcommand\P{\mathbb{P}}
\newcommand{\efface}[1]{}
\begin{document}

\title[Second order expansion of the rate function at the drift]{Random walks on  hyperbolic spaces: second order expansion of the rate function at the drift}

\author{Richard Aoun}
\address{American University of Beirut, Department of Mathematics, Faculty of Arts and Sciences, P.O. Box 11-0236 Riad El Solh, Beirut 1107 2020, Lebanon (on leave in New York University Abu Dhabi, PO Box 129188, Saadiyat Island, Abu Dhabi, United Arab Emirates)}
\email{ra279@aub.edu.lb}
\thanks{The first author is  supported  by a Research Group Linkage Programme from the Humboldt Foundation}
\author{Pierre Mathieu}
\address{Aix-Marseille Universit\'{e}, CNRS, Centrale Marseille, I2M, UMR 7373, 13453 Marseille, France}
\email{pierre.mathieu@univ-amu.fr}
\author{Cagri Sert}
\address{Institut f\"{u}r Mathematik, Universit\"{a}t Z\"{u}rich, 190, Winterthurerstrasse, 8057 Z\"{u}rich, Switzerland}
\email{cagri.sert@math.uzh.ch}
\thanks{The third author is supported by SNF Ambizione grant 193481.}

\begin{abstract}
Let $(X,d)$ be a geodesic Gromov-hyperbolic space, $o \in X$ a basepoint and $\mu$ a countably supported non-elementary probability measure on $\Isom(X)$. Denote by $z_n$ the random walk on $X$ driven by the probability measure $\mu$. Supposing that $\mu$ has finite exponential moment, we give a second-order Taylor expansion of the large deviation rate function  of the sequence $\frac{1}{n}d(z_n,o)$ and show that the corresponding coefficient is expressed by the variance in the central limit theorem satisfied by the sequence $d(z_n,o)$. This provides a positive answer to a question raised in \cite{BMSS}. The proof relies on the study of the Laplace transform of $d(z_n,o)$ at the origin using a martingale decomposition first introduced by Benoist--Quint together with an exponential submartingale transform and large deviation estimates for the quadratic variation process of certain martingales.
\end{abstract}
\maketitle


\section{Introduction}
Let $(X,d)$ be a geodesic Gromov-hyperbolic space,
$G=\Isom(X)$, $o$ a base point of $X$. 
A probability measure $\mu$ on $G$ defines a random walk on the group $G$ and subsequently on the metric space $X$ in the following way.  Let $(X_i)_{i\in \N}$ be a sequence of i.i.d.~ random variables on $G$ with distribution $\mu$. We let $L_n=X_n \cdots X_1$ denote the successive positions of the random walk on $G$. The  process $(z_n)_{n \in \N}$ on $X$ defined by $z_n=L_n \cdot o$ constitutes a Markov chain on $X$ that we shall refer to as a random walk on $X$. To avoid measurability issues, we will always suppose that the probability measure $\mu$ is countably supported.

Thanks to the subadditive ergodic theorem, under a finite first moment assumption, we have the following law of large numbers
\begin{equation}\label{eq.lln}
\frac{1}{n} d(z_n,o) \underset{n \to \infty}{\overset{a.s.}{\longrightarrow}} \ell_\mu,
\end{equation}
where $\ell_\mu \in [0,\infty)$ is a constant called the drift of the random walk. There has recently been substantial interest in the finer study of asymptotic properties of a random walks on Gromov-hyperbolic spaces. This recent progress shows that the resemblance between the asymptotic behaviour of random walk displacement and classical sums of i.i.d.~ real random variables is far more than the law of large numbers \eqref{eq.lln}: a central limit theorem (CLT) with the optimal finite second moment assumption is proven by Benoist--Quint \cite{BQ.hyperbolic} (see also Horbez \cite{horbez}) improving previous more restrictive versions by Ledrappier \cite{ledrappier.lecturenotes} and Bj\"{o}rklund \cite{bjorklund} -- an alternative proof of the CLT was later given by Mathieu--Sisto \cite{mathieu-sisto} and in a more restrictive setting by Gou\"{e}zel \cite{gouezel.gap}. These show that for a non-elementary probability measure $\mu$ with finite second moment (see below for the definitions), we have
\begin{equation}\label{eq.clt}
\frac{1}{\sqrt{n}}(d(z_n,o) - n l) \underset{n \to \infty}{\overset{\text{law}}{\longrightarrow}} \mathcal{N}(0,\sigma_{\mu}^2).
\end{equation}
The analogue of Cram\'{e}r's theorem on large deviation principles was recently proven by Boulanger--Mathieu--Sert--Sisto \cite{BMSS} (see also Gou\"{e}zel \cite{gouezel.first.moment}): they showed that for a non-elementary probability measure with a finite exponential moment, the sequence $\frac{1}{n}d(z_n,o)$ satisfies a large deviation principle (LDP) with a proper convex rate function $I:[0,\infty) \to [0,\infty]$ vanishing only at the drift $\ell_\mu$: for every (measurable) subset $R$ of $[0,\infty)$, we have 
\begin{equation}\label{eq.ldp}
\underset{\alpha \in \inte(R)}{-\inf I(\alpha)} \leq \underset{n \rightarrow \infty}{\liminf} \frac{1}{n}\ln \mathbb{P}(\frac{1}{n}d(z_n,o) \in R) \leq \underset{n \rightarrow \infty}{\limsup} \frac{1}{n}\ln \mathbb{P}(\frac{1}{n}d(z_n,o) \in R) \leq \underset{\alpha \in \overline{R}}{-\inf I(\alpha)}
\end{equation}
where $\inte(R)$ denotes the interior and $\overline{R}$ the closure of $R$.

Furthermore, concentration inequalities reminiscent of Hoeffding inequalities were recently shown by Aoun--Sert \cite{aoun-sert} and a local limit theorem for random walks on Gromov-hyperbolic group was proven by Gou\"{e}zel \cite{gouezel.local}. 

However, establishing these results analogous to the classical setting of sums of i.i.d.~ real random variables involves overcoming serious issues by use of various approaches and techniques. Apart from mostly geometric approaches such as the ones used in \cite{BMSS,gouezel.first.moment,mathieu-sisto}, two classical methods are present --- say in aforementioned different proofs of the CLT. These are Nagaev's analytic method \cite{nagaev} and Gordin--Lif\v{s}ic's martingale method \cite{martingale.method}. 

Nagaev's method can be seen as a version of the classical Fourier--Laplace transform and it relies on techniques of analytic perturbation theory, and in general, yields sharper estimates. However, implementing it requires proving a certain spectral gap result for a Markov operator acting on an appropriate boundary space. Although this is by-now standard, say, on classical hyperbolic spaces or on free groups, it is not well-developed in the generality of spaces, namely (not necessarily proper) geodesic Gromov-hyperbolic spaces  that we shall we working with in this article. A thorough study of the analytical method in the case of Gromov-hyperbolic groups was done by Gou\"{e}zel \cite[Proposition 3.6, Section 5]{gouezel.gap}.

We will extensively use the martingale approach --- developed in this setting by Benoist--Quint \cite{BQ.CLT, BQ.hyperbolic} and adapted to greater generality by Horbez \cite{horbez} and Aoun--Sert \cite{aoun-sert} --- to tackle the analytic problem of giving a second-order expansion of the limit Laplace transform of the sequence $d(z_n,o)$ (or by convex duality, of its large deviation rate function in \eqref{eq.ldp}) and relating it to the variance in the central limit theorem \eqref{eq.clt}. Similar results are known to hold in settings where spectral methods are available. We now expound on these notions and precisely state the main result of this note. 

A geodesic metric space $(X,d)$ is said to be Gromov-hyperbolic if there exists $\delta>0$ such that  for every $x,y,z,o \in X$, we have $(x|y)_o \geq (x|z)_o \wedge (z|y)_o -\delta$, where $(.|.)_.$ is the Gromov product given by $(x|y)_o=\frac{1}{2}(d(x,o)+d(y,o)-d(x,y))$. A probability measure $\mu$ on $\Isom(X)$ is called \textit{non-elementary} if its support $S$ generates a semigroup that contains two independent loxodromic elements (see \S \ref{sec.large.deviation}). For such a measure $\mu$ and $n \in \N$, we denote by $\mu^{\ast n}$ its $n^{th}$ convolution which is the law of the random variable $L_n$.

Given a probability measure $\mu$, the limit Laplace transform of the sequence $\frac{1}{n}d(z_n,o)$ is the function $\Lambda:\R \to (-\infty,\infty]$ defined by
\begin{equation}\label{eq.defn.laplace}
\Lambda(\lambda)=\lim_{n \to \infty} \frac{1}{n} \log \mathbb{E}[e^{\lambda d(z_n,o)}].
\end{equation}
Note that, since the increments are i.i.d and $G$ acts by isometries on $X$, subadditivity implies that the limit in (\ref{eq.defn.laplace}) exists. Under a finite super-exponential moment assumption, the Fenchel-Legendre transform of $\Lambda$ is the rate function $I$ of the large deviation principle satisfied by $\frac{1}{n}d(z_n, o)$ (see \cite[Lemma C.2]{BMSS}). 
Using this and \cite[Theorem 1.1]{BMSS}, one can deduce that the derivative of $\Lambda$ at 0 is equal to the drift $\ell_{\mu}$.

The goal of this note is to prove the following result which answers part of \cite[Question C.1]{BMSS} and which says that the convex function $\Lambda$ has a second order Taylor expansion at $0$ with second derivative equal to the variance in  the central limit theorem: 
\begin{theorem}\label{thm.main}
Let $(X,d)$ be a geodesic Gromov-hyperbolic space and $\mu$ a non-elementary probability measure on $\Isom(X)$. Suppose that $\mu$ has a finite exponential moment, i.e. for some $\alpha>0$, $\int{e^{\alpha \,d(g\cdot o, o)} d\mu(g)}<+\infty$. Then, we have
$$
\lim_{\lambda\to 0} \frac{\Lambda(\lambda)-\lambda \ell_\mu}{\lambda^2}=\frac{\sigma_\mu^2}{2}.$$
\end{theorem}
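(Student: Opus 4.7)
The plan is to implement the strategy outlined in the abstract: combine a Benoist--Quint martingale decomposition of $d(z_n,o)$ with an exponential submartingale transform, and conclude via large deviation estimates on the resulting quadratic variation process.

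First, following the boundary-cocycle construction of \cite{BQ.hyperbolic, horbez}, I would write
$$
d(z_n,o) - n\ell_\mu = M_n + \epsilon_n,
$$
where $(M_n)$ is a martingale with respect to the natural filtration $(\mathcal{F}_n)$, built from a Busemann-type cocycle at a boundary limit point, and $\epsilon_n$ is a cocycle remainder that is bounded on a set of overwhelming probability and whose exponential moments are controlled thanks to the finite exponential moment of $\mu$ together with the hyperbolicity estimates developed in \cite{BMSS, aoun-sert}. An application of Hölder's inequality then reduces Theorem \ref{thm.main} to the analogous statement for the pure martingale part:
$$
\lim_{\lambda \to 0}\frac{1}{\lambda^2}\lim_{n \to \infty}\frac{1}{n}\log \mathbb{E}\bigl[e^{\lambda M_n}\bigr] = \frac{\sigma_\mu^2}{2}.
$$

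Second, I would perform the exponential submartingale transform. Setting
$$
A_n(\lambda) := \sum_{k=1}^n \log \mathbb{E}\bigl[e^{\lambda \Delta M_k}\mid \mathcal{F}_{k-1}\bigr],
$$
the process $\exp(\lambda M_n - A_n(\lambda))$ is a true martingale of expectation one, whence $\mathbb{E}[e^{\lambda M_n}] = \mathbb{E}[e^{A_n(\lambda)}]$. A second-order Taylor expansion of the logarithm, valid because of the exponential-tail control on the increments $\Delta M_k$, gives
$$
A_n(\lambda) = \frac{\lambda^2}{2}\langle M\rangle_n + O(\lambda^3 n)
$$
on a set of overwhelming probability, where $\langle M\rangle_n$ is the predictable quadratic variation. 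The problem thus boils down to analyzing the Laplace transform of $\langle M\rangle_n$ near zero, relying on the fact that $\frac{1}{n}\langle M\rangle_n \to \sigma_\mu^2$, the variance appearing in the CLT \eqref{eq.clt}.

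The main obstacle, as flagged in the abstract, lies precisely in this last step: one needs a large deviation upper bound of the form $\mathbb{P}\bigl(\frac{1}{n}\langle M\rangle_n \geq \sigma_\mu^2 + \eta\bigr) \leq e^{-c(\eta)n}$, and dually for the lower tail, in order to transfer the almost sure convergence through the exponential and recover the right $\lambda^2$ coefficient. In the generality of (not necessarily proper) geodesic Gromov-hyperbolic spaces, the spectral-gap machinery used on the boundary in \cite{gouezel.gap} is unavailable, so these estimates must be built directly, probably by interpreting $\langle M\rangle_n$ as a Birkhoff-type sum along the boundary-valued Markov chain and exploiting the non-elementarity of $\mu$ together with the finite exponential moment, in the spirit of \cite{aoun-sert}. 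Once this upper-tail concentration is established, the bound $\Lambda(\lambda) - \lambda \ell_\mu \leq \tfrac{\lambda^2 \sigma_\mu^2}{2} + o(\lambda^2)$ follows; the matching lower bound is then obtained by tilting the probability measure via the exponential martingale $\exp(\lambda M_n - A_n(\lambda))$ and invoking Jensen's inequality, which reduces it once again to a concentration statement for the quadratic variation under the tilted law.
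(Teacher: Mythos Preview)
The central identity $\mathbb{E}[e^{\lambda M_n}] = \mathbb{E}[e^{A_n(\lambda)}]$ is false. The process $E_n := \exp(\lambda M_n - A_n(\lambda))$ is indeed a mean-one martingale, but that only gives $\mathbb{E}[e^{\lambda M_n}] = \mathbb{E}[E_n \cdot e^{A_n(\lambda)}]$, and since the predictable compensator $A_n(\lambda)$ is \emph{random} --- the increments $\Delta M_k$ are not i.i.d., their conditional law depends on the position $L_{k-1}\cdot x$ of the boundary Markov chain --- the product does not factor. Equivalently, $\mathbb{E}[e^{\lambda M_n}] = \tilde{\mathbb{E}}[e^{A_n(\lambda)}]$ holds only under the tilted law $d\tilde{\mathbb{P}}=E_n\,d\mathbb{P}$, whereas your large deviation input for $\langle M\rangle_n$ is under $\mathbb{P}$, not under a $\lambda$-dependent family. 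So neither the upper bound nor the subsequent ``tilting $+$ Jensen'' step goes through as written.

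This is exactly why the paper treats the two directions asymmetrically. For the lower bound no identity is needed: the submartingale transform (Proposition~\ref{prop.submart.trans}) yields $\mathbb{E}\bigl[\exp(\lambda M_n - \tfrac{\mathfrak{f}(\lambda a)}{a^2} G_n^a)\bigr]\geq 1$, and H\"older then gives a lower bound on $\mathbb{E}[e^{p\lambda M_n}]$ in terms of a negative-exponent moment of the bracket, controlled via Proposition~\ref{prop.bracket.large.dev} and Corollary~\ref{corollary.laplace.bracket}. For the upper bound, H\"older points the wrong way; instead the paper bounds each conditional factor $\mathbb{E}[e^{\lambda\Delta M_k}\mid\mathcal{F}_{k-1}]$ \emph{deterministically} by $\exp(\tfrac{\lambda^2}{2}(v(\mu)+\epsilon))$ with $v(\mu)=\sup_{\xi}\mathbb{E}[M_{\xi,1}^2]$ (Lemma~\ref{lemma.martingale.condition}), iterates conditioning, and then repairs the constant via the acceleration $\mu\rightsquigarrow\mu^{\ast k}$ together with $v(\mu^{\ast k})/k\to\sigma_\mu^2$ (Lemma~\ref{lemma.unif.conv}). (As a minor side point, the remainder $\epsilon_n$ in the decomposition is uniformly bounded by Lemma~\ref{lemma.martingale.dec}, not merely bounded with high probability.)
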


The proof uses extensively the martingale approach developed in this context by Benoist--Quint \cite{BQ.CLT,BQ.hyperbolic}. The martingale decomposition proved in these works allows us to reduce the study of $\Lambda$ near zero to the study of the limit Laplace transform of a martingale induced by an iid random walk on the group $\Isom(X)$. Once this reduction is done, the proof is divided into two parts: proving the lower bound, i.e.~$\lim_{\lambda\to 0} \frac{\Lambda(\lambda)-\lambda \ell_\mu}{\lambda^2}\geq \frac{\sigma_\mu^2}{2}$ and the upper bound, i.e.~$\lim_{\lambda\to 0} \frac{\Lambda(\lambda)-\lambda \ell_\mu}{\lambda^2}\leq \frac{\sigma_\mu^2}{2}$. 
The proof of the lower bound is based on a new exponential submartingale transform that we establish in Proposition \ref{prop.submart.trans}. The latter extends a classical result of Freedman \cite{Freedman} to the case of martingales with unbounded differences. The proof of the upper bound
uses ideas from martingale concentration inequalities. Another important tool is large deviation estimates for the quadratic variation of our martingales.

\begin{remark}
1. (\textit{Busemann cocycle}) A general version of Theorem \ref{thm.main} will be proved in Theorem \ref{thm.main.tech} where the displacement $d(z_n, o)$ is replaced with the Busemann cocycle $\sigma(L_n, x)$ of $L_n$ based at any point of $x$ in the horofunction compactification of $X$. See also Question  \ref{question} for an ensuing problem.\\[3pt]
2. (\textit{Translation distance}) Thanks to \cite[Theorem 1.3]{BMSS}, when $\mu$ has bounded support, one can replace $d(z_n,o)$ by $\tau(L_n)$ in \eqref{eq.defn.laplace}, where $\tau(.)$ denotes the translation distance given for $g \in \Isom(X)$ by $\tau(g)=\lim_{n \to \infty}\frac{1}{n}d(g^n \cdot o,o)$.\\[3pt]
3. (\textit{Positivity of $\sigma_\mu$}) By an argument of Benoist--Quint \cite{BQ.hyperbolic}, it follows from the expression of $\sigma_\mu^2$ (see \eqref{def.variance}) that $\sigma_\mu>0$ if any only if $\mu$ is non-arithmetic (see Remark \ref{rk.sigma.positive}).
\end{remark}

Using the convexity of the rate function proved in \cite{BMSS} and standard results from convex analysis, we deduce
\begin{corollary}[About the rate function]\label{corol.rate.function}
Keep the assumptions of Theorem \ref{thm.main} and let $I$ be the rate function \eqref{eq.ldp}. Then, we have
$$
\lim_{\lambda \to 0}\frac{I(\ell_\mu+\lambda)}{\lambda^2}=\frac{1}{2\sigma_\mu^2}$$
\end{corollary}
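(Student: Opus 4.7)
\textbf{Proof plan for Corollary \ref{corol.rate.function}.} The plan is to deduce the quadratic expansion of $I$ at $\ell_\mu$ from the quadratic expansion of $\Lambda$ at $0$ via Legendre--Fenchel duality. First, one uses the fact that $\Lambda$ is convex (as a limit of log--Laplace transforms) and that $I$ is convex by \cite{BMSS}, together with the super-exponential (or equivalently, under the exponential moment assumption, the relevant region of) relation $I(\alpha)=\sup_{\lambda\in\R}(\lambda\alpha-\Lambda(\lambda))$. Setting $\tilde\Lambda(\lambda):=\Lambda(\lambda)-\lambda \ell_\mu$, which is a nonnegative convex function with $\tilde\Lambda(0)=0$, Theorem \ref{thm.main} gives $\tilde\Lambda(\lambda)=\tfrac{\sigma_\mu^2}{2}\lambda^2+o(\lambda^2)$ as $\lambda\to0$, and the identity
$$
I(\ell_\mu+\epsilon)=\sup_{\lambda\in\R}\bigl(\lambda\epsilon-\tilde\Lambda(\lambda)\bigr).
$$
The task reduces to showing that this Fenchel conjugate admits the asymptotics $\tilde\Lambda^*(\epsilon)=\tfrac{\epsilon^2}{2\sigma_\mu^2}+o(\epsilon^2)$ as $\epsilon\to0$, which is a standard fact in convex analysis.

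For the lower bound on $I(\ell_\mu+\epsilon)$, one fixes $\eta>0$ and chooses $\delta>0$ so that $\tilde\Lambda(\lambda)\leq (\tfrac{\sigma_\mu^2}{2}+\eta)\lambda^2$ for $|\lambda|\leq\delta$. Plugging the specific value $\lambda_\epsilon=\epsilon/(\sigma_\mu^2+2\eta)$ (which lies in $[-\delta,\delta]$ for $|\epsilon|$ small) into the supremum yields
$$
I(\ell_\mu+\epsilon)\geq \lambda_\epsilon\epsilon-\tilde\Lambda(\lambda_\epsilon)\geq \frac{\epsilon^2}{2(\sigma_\mu^2+2\eta)},
$$
and letting $\eta\to0$ gives $\liminf_{\epsilon\to 0}I(\ell_\mu+\epsilon)/\epsilon^2\geq \tfrac{1}{2\sigma_\mu^2}$.

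For the upper bound, one again uses the local two-sided quadratic control $\tilde\Lambda(\lambda)\geq (\tfrac{\sigma_\mu^2}{2}-\eta)\lambda^2$ on $[-\delta,\delta]$, but the main point is to restrict the supremum to this neighborhood. Here convexity of $\tilde\Lambda$ together with $\tilde\Lambda(0)=0$ implies $\tilde\Lambda(\lambda)\geq \tfrac{\tilde\Lambda(\pm\delta)}{\delta}|\lambda|\geq (\tfrac{\sigma_\mu^2}{2}-\eta)\delta\,|\lambda|$ for $|\lambda|>\delta$, so for $|\epsilon|<(\tfrac{\sigma_\mu^2}{2}-\eta)\delta$ the contribution $\sup_{|\lambda|>\delta}(\lambda\epsilon-\tilde\Lambda(\lambda))$ is nonpositive and therefore dominated by the value at $\lambda=0$. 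One concludes
$$
I(\ell_\mu+\epsilon)\leq \sup_{\lambda\in\R}\Bigl(\lambda\epsilon-\bigl(\tfrac{\sigma_\mu^2}{2}-\eta\bigr)\lambda^2\Bigr)=\frac{\epsilon^2}{2(\sigma_\mu^2-2\eta)},
$$
and letting $\eta\to0$ yields $\limsup_{\epsilon\to 0}I(\ell_\mu+\epsilon)/\epsilon^2\leq \tfrac{1}{2\sigma_\mu^2}$. The step that carries the real content is of course Theorem \ref{thm.main}; the only mild subtlety in this derivation is ensuring that the supremum is effectively attained in $[-\delta,\delta]$, which is handled by the convexity-based tail bound above.
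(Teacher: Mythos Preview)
Your overall strategy---deduce the quadratic expansion of $I$ at $\ell_\mu$ from that of $\Lambda$ at $0$ via Fenchel duality---is exactly the paper's. Where the paper cites results from convex analysis (\cite[Proposition 6.2.5]{convexity.book}, \cite[Proposition 4.5]{gianluca}) for the duality of second-order expansions, you instead carry out the computation of $\tilde\Lambda^*$ by hand with the two-sided quadratic sandwich and the convexity-based tail bound. That elementary argument is correct and self-contained, which is a pleasant alternative to the black-box citations.

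There are, however, two points where your write-up is looser than the paper's. First, you implicitly assume $\sigma_\mu^2>0$ throughout (you divide by it); the paper treats the degenerate case $\sigma_\mu=0$ separately, observing via Remark~\ref{rk.sigma.positive} that then $I(\ell_\mu)=0$ and $I\equiv+\infty$ off $\{\ell_\mu\}$, so the claimed limit is $+\infty$ trivially.

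Second, and more substantively, your starting identity $I(\ell_\mu+\epsilon)=\sup_{\lambda}(\lambda\epsilon-\tilde\Lambda(\lambda))$ is only cited under a \emph{super}-exponential moment assumption (\cite[Lemma C.2]{BMSS}), and your parenthetical ``the relevant region of'' does not actually establish it under the mere exponential moment hypothesis of Theorem~\ref{thm.main}. The paper closes this gap as follows: by Varadhan's lemma one has $I^*(\lambda)=\Lambda(\lambda)$ for all $\lambda<\alpha$; since $\sigma_\mu^2>0$, the supremum defining $\Lambda^*(\ell_\mu+\epsilon)$ for small $\epsilon$ is effectively over a neighbourhood of $0$ contained in $(-\infty,\alpha)$, whence $(I^*)^*=\Lambda^*$ near $\ell_\mu$; finally $I=I^{**}$ because $I$ is convex and lower semicontinuous by \cite[Theorem 1.1]{BMSS}. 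Your own tail-localisation trick could in fact be recycled to make this step explicit (apply it to $I^*$ in place of $\tilde\Lambda$, using $I^*=\Lambda$ on $[-\delta,\delta]$), but as written the identification is asserted rather than proved.
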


Finally, we note that our results are also valid for the right random walk $R_n=X_1\ldots X_n$ since for every $n \in \N$, $L_n$ and $R_n$ have the same distribution.

The paper is organized as follows. In Section \ref{sec.martingale}, we recall some preliminaries on submartingales and prove an exponential transform for submartingales. In Section \ref{sec.hyperbolic}, we recall basic definitions about Gromov-hyperbolic spaces and metric compactifications as well as results from the theory of random walks on hyperbolic spaces. In particular, we recall that $d(z_n, o) -n\ell_{\mu}$ is at bounded distance from a martingale and prove a large deviation estimate for the predictable quadratic variation of the latter. In Section \ref{sec.proof} we prove Theorem \ref{thm.main} in its general form Theorem \ref{thm.main.tech},  by treating separately the lower bound (\S \ref{subsec.lower.bound}) and the upper bound (\S \ref{subsec.upper.bound}). In \S \ref{subsec.corol}, we deduce Corollary \ref{corol.rate.function}, and finally, discuss some ensuing questions in \S \ref{subsec.questions}. 

\section{Preliminaries on martingales}\label{sec.martingale}

In this section, we recall some preliminaries from the theory of martingales and prove a result about exponential martingale transforms that will play a crucial role in the proof of our main theorem.



Let us first fix our notation. We shall denote by $\mathcal{F}=(\mathcal{F}_n)_{n\in \N}$ an increasing sequence of $\sigma$-algebras (a filtration) on a fixed standard  probability space $\Omega$. Usually, we will consider the filtration to be fixed and omit it from the notation. The notation $M=(M_n)_{n \in \N}$ will be reserved for an adapted sequence of random variables that form either a martingale or submartingale. Denoting by $\Delta M$ the sequence of differences given by $\Delta_n M:=M_n-M_{n-1}$, we recall that $M$ being a submartingale means that for every $n\in \N$, $M_n$ is $\mathcal{F}_n$-measurable, integrable, and it satisfies respectively $\mathbb{E}[\Delta_n M | \mathcal{F}_{n-1}] \geq 0$. In the sequel, unless otherwise stated, we take $M_0=0$ a.s. The predictable quadratic variation (or conditional quadratic variation) of the submartingale $M_n$ is denoted by $\langle M\rangle_n:= \sum_{i=1}^n \mathbb{E}[(\Delta_i M)^2|\mathcal{F}_{i-1}]$. Given a positive constant $a>0$, we denote 
\begin{equation}\label{transform.G}
G_n^a=\sum_{i=1}^n \mathbb{E}[(\Delta_i M)^2 1_{|\Delta_i M| \leq a}| \mathcal{F}_{i-1}]-a \sum_{i=1}^n |\Delta_i M| 1_{|\Delta_i M| \geq a}.\end{equation} Finally, the following special function defined on $\R$ will play a significant role: $\mathfrak{f}(\lambda)=e^{-\lambda} -1 +\lambda$.

We start by recalling Freedman's submartingale transform whose statement and proof strategy will be used in our generalization below.

\begin{proposition}\cite[Corollary 1.4 (b) \& (3.9)]{Freedman} \label{prop.freedman.submartingale}
1. Let $X$ be an integrable random variable with $\mathbb{E}(X)=0$ (resp.~ $\mathbb{E}[X] \geq 0$) and $X\geq -1$ (resp.~ $|X| \leq 1$) a.s. Then, for every $\lambda \geq 0$, we have $$
\mathbb{E}[e^{\lambda X}] \geq e^{\mathfrak{f}(\lambda) Var(X)}.
$$
2. Let $(M_n)_{n\in \N}$ be a submartingale and such that for every $1 \leq n \in \N$, $|\Delta_n M|\leq 1$ almost surely. Then for every $\lambda \geq 0$, the sequence of random variables 
$$\left(\exp\left(\lambda M_n -  \mathfrak{f}(\lambda) \langle M\rangle_n\right)\right)_{n \in \N}$$ is a submartingale with respect to the same filtration.
\end{proposition}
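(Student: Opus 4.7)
The proof splits naturally between the scalar statement (1) and its conditional consequence (2).

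For (1), my approach begins with the pointwise convex inequality
$$e^{\lambda x} \;\geq\; 1 + \lambda x + \mathfrak{f}(\lambda)\, x^2 \qquad \text{for all } x \geq -1 \text{ and } \lambda \geq 0,$$
established by elementary calculus on the auxiliary function $h(x) = e^{\lambda x} - 1 - \lambda x - \mathfrak{f}(\lambda) x^2$: this $h$ vanishes at both $x=-1$ and $x=0$, satisfies $h'(0) = 0$, and its second derivative $h''(x) = \lambda^2 e^{\lambda x} - 2\mathfrak{f}(\lambda)$ is monotone in $x$, which via a shape analysis forces $h \geq 0$ on $[-1, \infty)$. Integrating against the law of $X$ and using the mean hypothesis produces the polynomial lower bound $\mathbb{E}[e^{\lambda X}] \geq 1 + \mathfrak{f}(\lambda) \operatorname{Var}(X)$. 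The real technical step is then to upgrade this to the exponential form $e^{\mathfrak{f}(\lambda) \operatorname{Var}(X)}$, since $1+u < e^u$ for $u > 0$; I would close this gap via a standard moment-problem reduction showing that, among all distributions with $\mathbb{E}[X] = 0$, $\operatorname{Var}(X) = v$ and $X \geq -1$, the MGF $\lambda \mapsto \mathbb{E}[e^{\lambda X}]$ is minimized at the extremal two-point law on $\{-1, v\}$ with weights $\tfrac{v}{v+1}$ and $\tfrac{1}{v+1}$. The inequality then reduces to the explicit scalar inequality
$$\frac{v\, e^{-\lambda} + e^{\lambda v}}{v+1} \;\geq\; e^{\mathfrak{f}(\lambda) v},$$
which can be verified by a Taylor expansion at $\lambda = 0$ (the two sides agree through second order, with the third derivative of the difference proportional to $v^2$) combined with an asymptotic check as $\lambda \to \infty$; the second case of (1) is handled analogously.

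For (2), expanding the conditional expectation gives
$$\mathbb{E}\bigl[e^{\lambda M_n - \mathfrak{f}(\lambda)\langle M\rangle_n} \,\big|\, \mathcal{F}_{n-1}\bigr] = e^{\lambda M_{n-1} - \mathfrak{f}(\lambda)\langle M\rangle_{n-1}} \cdot e^{-\mathfrak{f}(\lambda) \mathbb{E}[(\Delta_n M)^2 \mid \mathcal{F}_{n-1}]} \,\mathbb{E}\bigl[e^{\lambda \Delta_n M} \mid \mathcal{F}_{n-1}\bigr],$$
so the submartingale property reduces to the conditional inequality $\mathbb{E}[e^{\lambda \Delta_n M} \mid \mathcal{F}_{n-1}] \geq \exp(\mathfrak{f}(\lambda) \mathbb{E}[(\Delta_n M)^2 \mid \mathcal{F}_{n-1}])$, which is precisely (1) applied conditionally on $\mathcal{F}_{n-1}$ to $Y = \Delta_n M$: this $Y$ satisfies $|Y| \leq 1$ and, by the submartingale assumption, $\mathbb{E}[Y \mid \mathcal{F}_{n-1}] \geq 0$ almost surely. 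The principal obstacle of the whole statement therefore lies in the moment-problem upgrade inside (1), i.e.~in turning the polynomial lower bound into its exponentiated counterpart.
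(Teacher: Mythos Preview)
The paper does not prove this proposition: it is quoted from Freedman, and the only remark offered is that statement~2 is a consequence of statement~1. Your reduction of part~2 to part~1 via conditioning on $\mathcal{F}_{n-1}$ is exactly that observation and is correct.

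Your attempted proof of part~1, however, has a real gap at the final step. The pointwise inequality $e^{\lambda x} \geq 1 + \lambda x + \mathfrak{f}(\lambda) x^2$ on $[-1,\infty)$ is correct, and so is its integrated consequence $\mathbb{E}[e^{\lambda X}] \geq 1 + \mathfrak{f}(\lambda)\operatorname{Var}(X)$; but, as you yourself note, this polynomial bound is strictly weaker than the required exponential one. Your proposed remedy --- reduce to the two-point law on $\{-1,v\}$ and then verify
\[
\frac{v\,e^{-\lambda} + e^{\lambda v}}{v+1} \;\geq\; e^{\mathfrak{f}(\lambda)\,v}
\]
``by a Taylor expansion at $\lambda=0$ combined with an asymptotic check as $\lambda\to\infty$'' --- is not a proof. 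Knowing that the two sides agree to second order at $0$ with third-order difference $v^{2}\lambda^{3}/6$, and that their ratio tends to $e^{v}/(v+1)>1$ at infinity, controls the sign of the difference only near $0$ and near $\infty$; it says nothing about the intermediate range, and the inequality must hold for \emph{every} $\lambda\geq 0$ and every $v>0$. The extremal claim itself --- that among all laws with $\mathbb{E}[X]=0$, $\operatorname{Var}(X)=v$, $X\geq -1$ the MGF is minimised on $\{-1,v\}$ --- is likewise asserted rather than argued. Note also that once you commit to a two-point reduction the opening pointwise inequality becomes superfluous.

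For comparison, Freedman's own route --- whose architecture the paper reproduces in the proof of the subsequent submartingale-transform proposition (see Case~4 there) --- avoids the variance-constrained extremal problem altogether. One writes an arbitrary mean-zero law as a \emph{mixture} $\nu=\int\nu_\alpha\,d\theta(\alpha)$ of two-point mean-zero laws, proves the inequality for each two-point component $\nu_\alpha$ by a direct one-variable analysis, and then recombines via Jensen applied to $u\mapsto e^{\mathfrak{f}(\lambda)u}$. This isolates a clean calculus check and sidesteps both of the unproved steps in your outline.
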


We note that the second statement above is a consequence of the first one.


The following result provides a generalization of Proposition \ref{prop.freedman.submartingale} to submartingales with increments possessing a finite exponential moment.

\begin{proposition}[Submartingale transform]\label{prop.submart.trans}
Let $(M_n)_{n\in \N}$ be a submartingale. Suppose that there exists a constant $\alpha>0$ such that for every $n \in \N$, we have $\mathbb{E}[e^{\alpha \sum_{k=1}^n |\Delta_k M|}]<\infty$. Then, given any $a>0$, for every $\lambda >0$ small enough, the sequence of random variables $$\left(\exp\left(\lambda M_n - \frac{\mathfrak{f}(\lambda a)}{a^2}G_n^a\right)\right)_{n \in \N}$$ is a submartingale with respect to the same filtration. 
\end{proposition}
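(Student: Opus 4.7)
The plan is to reduce the submartingale claim to a one-step conditional inequality and then prove it via a pointwise estimate extending Freedman's. Extracting the $\mathcal{F}_{n-1}$-measurable piece of $\Delta_n G^a$ from the conditional expectation and writing $X:=\Delta_n M$, the submartingale property at step $n$ is equivalent to
\[
\mathbb{E}\!\left[\exp\!\left(\lambda X + \tfrac{\mathfrak{f}(\lambda a)}{a}|X|\mathbf{1}_{|X|>a}\right)\,\Big|\,\mathcal{F}_{n-1}\right]\;\geq\;\exp\!\left(\tfrac{\mathfrak{f}(\lambda a)}{a^2}\,\mathbb{E}[X^2\mathbf{1}_{|X|\leq a}\mid\mathcal{F}_{n-1}]\right), \qquad (\star)
\]
where the submartingale hypothesis gives $\mathbb{E}[X\mid\mathcal{F}_{n-1}]\geq 0$ and the finite exponential moment of $\sum_{k\leq n}|\Delta_k M|$ controls the tails of $X$.

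The engine of the proof is the pointwise inequality
\[
e^{\lambda x + \tfrac{\mathfrak{f}(\lambda a)}{a}|x|\mathbf{1}_{|x|>a}}\;\geq\;1+\lambda x+\tfrac{\mathfrak{f}(\lambda a)}{a^2}\,x^2\,\mathbf{1}_{|x|\leq a}, \qquad x\in\mathbb{R},\ \lambda\geq 0, \qquad (\dagger)
\]
which I would verify by three cases. For $|x|\leq a$, after rescaling via $y=x/a$, $\lambda'=\lambda a$, $(\dagger)$ reduces to Freedman's pointwise bound $e^{\lambda' y}\geq 1+\lambda' y+\mathfrak{f}(\lambda')y^2$ valid for $y\geq -1$; this follows by checking that the ratio $(e^{\lambda' y}-1-\lambda' y)/y^2$ is non-decreasing on each of $(-\infty,0)$ and $(0,\infty)$ and equals $\mathfrak{f}(\lambda')$ at $y=-1$. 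For $x>a$, the left-hand side dominates $e^{\lambda x}\geq 1+\lambda x$. For $x<-a$, the exponent collapses to $\beta x$ with $\beta:=(1-e^{-\lambda a})/a\in(0,\lambda)$, and $e^{\beta x}\geq 1+\lambda x$ on $(-\infty,-a]$ is verified by noting that $x\mapsto e^{\beta x}-1-\lambda x$ has non-positive derivative there (since $\beta e^{\beta x}\leq\beta\leq\lambda$) and is non-negative at $x=-a$ by a direct computation.

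Integrating $(\dagger)$ against the conditional law of $X$ and using $\mathbb{E}[X\mid\mathcal{F}_{n-1}]\geq 0$ produces the intermediate bound $\mathbb{E}[\cdots\mid\mathcal{F}_{n-1}]\geq 1+\tfrac{\mathfrak{f}(\lambda a)}{a^2}V$ with $V:=\mathbb{E}[X^2\mathbf{1}_{|X|\leq a}\mid\mathcal{F}_{n-1}]$. The main technical obstacle, and the reason the proposition restricts to $\lambda>0$ small, is closing the gap $e^{\frac{\mathfrak{f}(\lambda a)}{a^2}V}-(1+\frac{\mathfrak{f}(\lambda a)}{a^2}V)=O(\lambda^4)$ between this linear lower bound and the exponential one required by $(\star)$. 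The plan is to retain higher-order slack from $(\dagger)$: on $|x|\leq a$ the expansion of $e^{\lambda x}-(1+\lambda x+\tfrac{\mathfrak{f}(\lambda a)}{a^2}x^2)$ has leading positive cubic $\tfrac{\lambda^3}{6}x^2(x+a)$, and on $|x|>a$ the linear estimate discards a positive contribution of order $\tfrac{\mathfrak{f}(\lambda a)}{a}|x|=O(\lambda^2)|x|$. Using the finite exponential moment to uniformly bound the relevant cubic and quartic moments of $X$, one shows that for $\lambda>0$ sufficiently small (in terms of $a$ and the moment constant $\alpha$) these positive slacks dominate the $O(\lambda^4)$ gap, yielding $(\star)$.

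The main obstacle is precisely this final step: quantitatively upgrading the pointwise linear estimate to the required exponential lower bound, where both the smallness of $\lambda$ and the finite exponential moment enter in an essential way.
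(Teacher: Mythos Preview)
Your reduction to the one-step conditional inequality $(\star)$ is correct and matches the paper exactly, and your pointwise inequality $(\dagger)$ is true. The difficulty is precisely where you locate it: after integrating $(\dagger)$ you only obtain the \emph{linear} lower bound $1+\tfrac{\mathfrak{f}(\lambda a)}{a^2}V$, whereas $(\star)$ requires the \emph{exponential} one. Your plan to close this gap by retaining higher-order slack is not convincingly carried out. The leading cubic slack $\tfrac{\lambda^3}{6}\E[X^2(X+a)\mathbf{1}_{|X|\le a}]$ can vanish identically (put the mass on $|X|\le a$ entirely at $-a$), so that in such configurations the entire burden shifts to the slack on $\{|X|>a\}$; controlling that slack from below in terms of $V$ requires exploiting the constraint $\E[X]\ge 0$ in a way that essentially reproduces a case analysis. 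Your appeal to exponential moments to ``uniformly bound cubic and quartic moments'' does not help here: on $|X|\le a$ these are bounded by $a^k$ regardless, and the problem is not that the gap is large but that the slack coefficient can be zero. As written, the final step is a genuine gap.

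The paper takes a different route that sidesteps this issue. After the same reduction to $(\star)$, it proves the unconditional inequality (for \emph{every} $\lambda>0$, not just small ones) by Freedman's extreme-point method: decompose the law of $X$ as a mixture of two-point distributions, each with nonnegative mean, and verify $(\star)$ for two-point laws directly. For two-point laws one distinguishes whether both atoms lie in $[-a,a]$ (where Freedman's bound applied to $X/r$ with $r=\max\{c,d\}$ and the monotonicity of $t\mapsto\mathfrak{f}(t)/t^2$ suffice) or not (where Jensen and elementary estimates close the case); the passage from $\E[X]=0$ to $\E[X]\ge0$ is handled by a concavity argument in the weight parameter. The general case then follows from Jensen applied to the mixture. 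In this argument the restriction ``$\lambda>0$ small enough'' in the proposition is used \emph{only} to guarantee integrability of $\exp(\lambda M_n-\tfrac{\mathfrak{f}(\lambda a)}{a^2}G_n^a)$ via the exponential-moment hypothesis; it plays no role in the inequality $(\star)$ itself. Your belief that smallness of $\lambda$ and the exponential moment ``enter in an essential way'' in $(\star)$ is therefore a misreading of where the hypothesis is actually needed.
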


This is an extension (to unbounded differences) of Freedman's submartingale transform in his seminal work \cite{Freedman}. Indeed, if the difference sequence $\Delta_n M$ satisfies $|\Delta_n M| \leq 1$ a.s., the transform in the previous result boils down to Proposition \ref{prop.freedman.submartingale}. On the other hand, it applies, for instance, when there exists a constant $\alpha>0$ such that for every $n\in \N$,  $\mathbb{E}[e^{\alpha |\Delta_n M|} | \mathcal{F}_{n-1}]<+\infty$. 
 This will be the case in our application.
The counterparts of Proposition \ref{prop.submart.trans} for supermartingale transforms were obtained by Dzhaparidze--van Zanten \cite{DZ} (see also Fan--Grama--Liu \cite{fan.grama.liu}).

\begin{proof} Let $a>0$. By the finite exponential moment hypothesis on $(M_n)_{n \in \N}$, it is clear that for every $\lambda>0$ small enough and for every $n \in \N$,  $\exp(\lambda M_n - \frac{\mathfrak{f}(\lambda a)}{a^2}G_n^a)$ is $\mathcal{F}_n$-measurable and integrable. Therefore, by expanding the conditional expectation, one sees that it is enough to show the following: for any integrable random variable $X$ with $\E[X]\geq 0$, for any $\lambda>0$, 
\begin{equation}\label{inequality.super}
\E[\exp(\lambda X + \frac{\mathfrak{f}(\lambda a)}{ a} |X| \mathds{1}_{|X|\geq a})]
\geq e^{\frac{\mathfrak{f}(\lambda a)}{a^2} \E[X^2 \mathds{1}_{|X| \leq a}]}.\end{equation}
Denote by $\nu$ the distribution of $X$. 
\begin{itemize}[leftmargin=1cm]
\item Case 1: $\E[X]=0$ and $\nu$ is supported on two points $-c$ and $d$ with $c,d>0$ and both $c,d \leq a$. Let $r:=\max\{c,d\}$. 
Since $\E[X]= 0$ and 
$X\geq -r$ almost surely,  1.~ of   Proposition \ref{prop.freedman.submartingale} (applied to  $\frac{X}{r}$ and to $\lambda r$) yields   
$$\E[\exp(\lambda X + \frac{\mathfrak{f}(\lambda a)}{ a} |X| \mathds{1}_{|X|\geq a})]= \E[\exp(\lambda X)]\geq e^{\frac{\mathfrak{f}(\lambda r )}{r^2} \E[X^2]}.$$
Since the function $x\mapsto \frac{\mathfrak{f}(x)}{x^2}$ is decreasing on $\R$ and since $r \leq a$, we deduce that $\frac{\mathfrak{f}(\lambda r)}{r^2}\geq \frac{\mathfrak{f}(\lambda a)}{a^2}$. Therefore
$$
\E[\exp(\lambda X + \frac{\mathfrak{f}(\lambda a)}{a} |X| \mathds{1}_{|X|\geq a})]\geq e^{\frac{\mathfrak{f}(\lambda a) }{a^2}  \E[X^2]} = e^{\frac{\mathfrak{f}(\lambda a )}{a^2} \E[X^2 \mathds{1}_{|X|\leq a}]}.
$$

\item Case 2: $\E[X]=0$ and $\nu$ is supported exactly on two points $-c$ and $d$ and we are not in Case 1. By Jensen's inequality, we obtain 
\begin{equation}\label{eq.i1}
\begin{aligned}
\E[\exp(\lambda X +\frac{\mathfrak{f}(\lambda a)}{ a} |X| \mathds{1}_{|X|\geq a})] &\geq \exp(\lambda \E[X]+ \frac{\mathfrak{f}(\lambda a)}{a} \E[|X|\mathds{1}_{|X|\geq a}])\\ &=
\exp( \frac{\mathfrak{f}(\lambda a)}{a} \E[|X| \mathds{1}_{|X|\geq a}]).
\end{aligned}
\end{equation}
If both $c,d\geq a$, then the right hand side of \eqref{inequality.super} is equal to $1$ and hence \eqref{inequality.super} holds in view of \eqref{eq.i1}. So, since we are also not in Case 1, we can suppose that either $c > a$ and $d< a$, or $d > a$ and $c< a$. Let us treat the case $c >  a$ and $d < a$. Notice also that since $\E[X]=0$ we have $\p(X=c)=\frac{d}{c+d}$ and $\p(X=d)=\frac{c}{c+d}$.
By assumption on $c,d$ and \eqref{eq.i1}, these yield 
\begin{equation}\label{inequality.super1}\E[\exp(\lambda X +\frac{\mathfrak{f}(\lambda a)}{ a} |X| \mathds{1}_{|X|\geq a})]
\geq \exp(\frac{\mathfrak{f}(\lambda a)}{a} \frac{cd}{c+d}).\end{equation}
On the other hand, 
\begin{equation}\label{inequality.super2}\exp \left( \frac{\mathfrak{f}(\lambda a)}{a^2} \E[X^2 1_{|X|\leq a}]\right)=
\exp\left(\frac{\mathfrak{f}(\lambda a)}{a^2}   \frac{d^2c}{c+d}\right).\end{equation}
Since $d\leq a$, \eqref{inequality.super} follows from combining \eqref{inequality.super1} and \eqref{inequality.super2}. The case $d>a$ and $c\leq a$ can be treated similarly. 
   
\item Case 3: $\E[X]\geq 0$ and $\nu$ is supported on two points $\{-c,d\}$ with $c,d>0$. We will study the behavior of the left-hand-side and the right-hand-side of \eqref{inequality.super} when we vary $\nu$ with the condition $\E[X]\geq 0$, while fixing $a,c,d,\lambda$. Since $\nu$ is supported on two points, it is enough to treat the behavior of these quantities when 
$\beta:=\p(X=d)$ varies. Observe that since $\E[X]\geq 0$, we have $ \frac{c}{c+d}\leq \beta \leq 1$. The function  
$\psi_1(\beta):=\E_{\nu}[\exp(\lambda X + \frac{\mathfrak{f}(\lambda a)}{a} |X| \mathds{1}_{|X|\geq a})]$ is linear in $\beta$ while the function $\psi_2(\beta):= e^{\frac{\mathfrak{f}(\lambda a)}{a^2} \E_{\nu}[X^2 \mathds{1}_{|X| \leq a}]}$ is convex in $\beta$ (being of the form $\psi_2(\beta)=e^{L(\beta)}$ with $L$ an affine map). Thus the function $\psi:=\psi_1-\psi_2$ is concave on $[\frac{c}{c+d}, 1]$. It suffices then to check that $\psi(\frac{c}{c+d})\geq 0$ and that $\psi(1)\geq 0$. The case $\beta=\frac{c}{c+d}$ corresponds to the case $\E_{\nu}[X]=0$ and hence, by cases 1 and 2, $\psi(\frac{c}{c+d})\geq 0$. 
The case $\beta=1$ corresponds to $\nu=\delta_d$. Clearly, $\psi(1)\geq 0$ when  $d\geq a$.  When $d<a$, the relation $\psi(1)\geq 0$    follows from the facts that the function  $x\mapsto \frac{\mathfrak{f}(x)}{x^2}$ is decreasing and that $\mathfrak{f}(x)\leq x$ for every $x\geq 0$. This concludes the proof of  \eqref{inequality.super} in this case. 

\item Case 4: here we treat the general case (cf.~ proof of \cite[Proposition 3.6]{Freedman}). Since $\E[X]\geq 0$, we can find a family $(\nu_{\alpha})_{\alpha \in I}$ of probability measures, each supported on two points $-c_{\alpha}\leq 0$ and $d_{\alpha}>0$ and  of expectation $\geq 0$, and a probability measure $\theta$ on $I$ such that $\nu=\int{d\theta(\alpha) \nu_{\alpha}}$. We have
\begin{equation*}
\begin{aligned}
\E[\exp(\lambda X + \frac{\mathfrak{f}(\lambda a)}{a} |X| \mathds{1}_{|X|\geq a})]&=\int
\left(\int  e^{\lambda x +\frac{\mathfrak{f}(\lambda a)}{a}|x| 1_{|x| \geq a}}d\nu_\alpha(x)\right)d\theta(\alpha) \\  
&\geq \int  e^{\frac{\mathfrak{f}(\lambda a)}{a^2} \int x^2 1_{|x| \leq a} d\nu_\alpha(x)}d\theta(\alpha) \\ 
& \geq e^{\frac{f(\lambda a)}{a^2} \iint{x^2 \mathds{1}_{|x|\leq a} d\nu_{\alpha}(x) d\theta(\alpha)}}\\
&=   e^{\frac{\mathfrak{f}(\lambda a)}{a^2} \mathbb{E}[X^2 1_{|X| \leq a}]},
\end{aligned}
\end{equation*}
where we applied \eqref{inequality.super} for each probability measure $\nu_{\alpha}$ in the second inequality and Jensen in the third inequality. 
\end{itemize}
\end{proof}

\section{Random walks on hyperbolic spaces}\label{sec.hyperbolic}

\subsection{Preliminaries on hyperbolic spaces}\label{subsec.hyperbolic}
Let us first fix our notation. 
Let $(X,d)$ be a geodesic metric space. Fix a base point $o\in X$.  Recall that $(X,d)$ is said to be $\delta$-hyperbolic (where $\delta\geq 0$) if 
  for every $x,y,z,o \in M$,
\begin{equation}\label{eq.defining.eq}
(x|y)_o \geq (x|z)_o \wedge (z|y)_o -\delta,    
\end{equation}
where $(.|.)_.$ is the Gromov product given by $(x|y)_o=\frac{1}{2}(d(x,o)+d(y,o)-d(x,y))$. For simplicity, we will often omit the basepoint $o$ from the notation. 
We recall that this category of metric spaces comprises many usual spaces: trees, classical hyperbolic spaces, the fundamental group of  compact surfaces of genus $\geq 2$. We recall that the definition of hyperbolicity is equivalent to geodesic triangles  being  thin. We refer to \cite{hyperbolic-book} for general properties of these spaces. Denote by $G:=\Isom(X)$ the group of isometries of the metric space $(X,d)$. The displacement of $g\in G$ is by definition 
$$\kappa(g):=d(g\cdot o, o).$$
An element $\gamma \in G$ is said to be loxodromic if for any $x \in X$, the sequence $(\gamma^nx)_{n \in \Z}$ constitutes a quasi-geodesic (see \cite[Ch.\ 3]{hyperbolic-book}). Equivalently, $\gamma$ is loxodromic if and only if it fixes precisely two points $x_\gamma^+,x_\gamma^-$ on the Gromov boundary $\partial X$ of $X$ \cite[Ch.\ 9 \& 10]{hyperbolic-book}. Two loxodromic elements $\gamma_1,\gamma_2$ are said to be independent if the sets of fixed points $\{x^+_{\gamma_i} ,x^-_{\gamma_i}\}$ for $i=1,2$ are disjoint. Finally, a set $S$, or equivalently a probability measure with support $S$, is said to be non-elementary if the semigroup generated by $S$ contains at least two independent loxodromic elements.

Now we recall the definition of the Busemann compactification of $X$ (no need for hyperbolicity in this part). Denote by  $\Lip^1(X)$ the set of real valued Lipschitz functions on $X$ with Lipschitz constant $1$, endowed with the  topology of pointwise convergence. Fixing $o \in X$, for $x \in X$, let the function $h_x \in \Lip_{o}^1(X)$, defined by $h_x(m)=d(x,m)-d(x,o)$, where $\Lip_{o}^1(X)$ is the subspace of $\Lip^1(X)$ consisting of functions $f$ satisfying $f(o)=0$. If $X$ is separable, the closure of $\{h_x  \, ; \, x \in X\}$ is a compact metrizable subset of $\Lip^1_o(X)$, called the \textit{horofunction compactification} of $X$ (see e.g.\ \cite[Proposition 3.1]{maher-tiozzo}). It will be denoted as $\overline{X}^h$. The map $x \mapsto h_x$ is injective on $X$ (and an embedding when $X$ is a proper metric space) and we usually identify $X$ with its image in $\overline{X}^h$. The \textit{horofunction boundary} of $X$ is defined as $\partial_h X:=\overline{X}^h\setminus X$. The group of isometries $\Isom(X)$ acts on $\overline{X}^h$ by homeomorphisms given, for $g \in \Isom(X)$,  $h \in \overline{X}^h$ and $m \in X$, by $(g.h)(m)=h(g^{-1}m)-h(g^{-1}o)$. This extends equivariantly the isometric action of $\Isom(X)$ on $X$ and the set $\partial_h X \subset \overline{X}^h$ is invariant under $\Isom(M)$. The \textit{Busemann cocycle} $\sigma: \Isom(X) \times \overline{X}^h \to \mathbb{R}$ is defined by $$\sigma(g,h)=h(g^{-1}o).$$
Observe that for every $g\in G$ and $x\in \overline{X}^h$, 

\begin{equation}\label{useful}\sigma(g,o)=\kappa(g) \,\,\,\,\textrm{and}\,\,\,\, |\sigma(g,x)|\leq \kappa(g)\end{equation}

Finally, we recall that the Gromov product can be extended to the whole Busemann compactification by setting $(x|y)_o:=-\min_{z\in X}{\frac{1}{2}(h_x(z)+h_y(z))}$. In particular, one can infer that for $x\in \overline{X}^h$ and 
$y\in X$,   \begin{equation}\label{gromov.extended.space.boundary}(x|y)_o=\frac{1}{2} (d(y,o)-h_x(y)).\end{equation}

\subsection{Random walks}
\label{sec.large.deviation}
There are two main goals in this section. The first one (discussed in \S \ref{subsub.mart.dec}) is to recall a martingale decomposition (Lemma \ref{lemma.martingale.dec}) of the Busemann cocycle along non-elementary random walks on Gromov-hyperbolic spaces which is due to Benoist--Quint \cite{BQ.CLT,BQ.hyperbolic} (see also an extension in \cite{horbez}). We will use a slightly more general version of this worked out in \cite{aoun-sert}. The second goal (discussed in \S \ref{subsub.bracket.large.dev}) is to prove Proposition \ref{prop.bracket.large.dev} about large deviations of predictable quadratic variation and its consequence expressed in Corollary \ref{corollary.laplace.bracket}. The latter will be crucial in the proof of our main result.

\subsubsection{Benoist--Quint martingale decomposition}\label{subsub.mart.dec} 

Let $\mu$ be a probability measure on the isometry group $G$ of $X$ with countable support. Recall that it is said to have a finite exponential moment (resp.~ finite second moment) if there exists $\alpha>0$ such that $\int e^{\alpha d(g \cdot o, o)} d\mu(g)<\infty$ (resp.~ $\int \kappa(g)^2 d\mu(g)<\infty$). Let $L_n=X_n\cdots X_1$ be the random walk on $G$ and $\ell_{\mu}$ the drift of the random walk on $X$ defined in \eqref{eq.lln}.
Denote by $\mathcal{F}$ the natural filtration generated by the increments $X_i$'s. Finally, we denote by $P_{\mu}$ the Markov operator on the horofunction compactification $\overline{X}^h$  induced by the random walk on $G$, i.e.~ $P_{\mu}f(x)=\int{f(g\cdot x) d\mu(g)}$ for every bounded measurable function $f$ on $\overline{X}^h$. The starting point of the proof of Theorem \ref{thm.main} is the following.

\begin{lemma}\label{lemma.martingale.dec}
Let $\mu$ be a non-elementary probability measure  with finite second moment. Then, for every $x\in \overline{X}^h$, there exists a martingale $M_x=(M_{x,n})_{n\in \N}$ with respect to the filtration $\mathcal{F}$ starting at the origin and such that  for every $n\in \N$, 
$$\sigma(L_n, x)-n\ell_{\mu} = M_{x,n} + O_{x,n}(1),$$
where $O_{x,n}(1)$ is a random variable whose absolute value is bounded uniformly in $n\in \N$ and $x\in \overline{X}^h$. 
\end{lemma}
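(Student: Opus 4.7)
My plan is to carry out a Gordin--Lif\v{s}ic style coboundary decomposition for the Busemann cocycle $\sigma$. Using the cocycle identity $\sigma(gh, x) = \sigma(g, h\cdot x) + \sigma(h, x)$, one writes
$$\sigma(L_n, x) = \sum_{k=1}^n \sigma(X_k, L_{k-1}\cdot x).$$
Setting $\psi(y) := \int_G \sigma(g,y)\, d\mu(g)$, which is well-defined and bounded on $\overline{X}^h$ by $\int \kappa\, d\mu < \infty$ thanks to \eqref{useful} and the first-moment hypothesis (implied by finite second moment), and letting $u := \psi - \ell_\mu$, a centering of each increment against its conditional mean produces
$$\sigma(L_n,x) - n\ell_\mu = N_n + S_n,$$
where $N_n := \sum_{k=1}^n \bigl(\sigma(X_k, L_{k-1}\cdot x) - \psi(L_{k-1}\cdot x)\bigr)$ is already a martingale with respect to $\mathcal{F}$, and $S_n := \sum_{k=0}^{n-1} u(L_k\cdot x)$ is an additive functional of the Markov chain $(L_k\cdot x)_k$.

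The goal is to further rewrite $S_n$ as a martingale plus a term bounded independently of $n$ and $x$, by solving the Poisson equation
$$(I - P_\mu)\phi = u \qquad \text{on }\overline{X}^h,$$
with $\phi$ \emph{uniformly} bounded on $\overline{X}^h$. Given such a $\phi$, the standard telescoping trick yields
$$S_n = \phi(x) - \phi(L_n\cdot x) + \sum_{k=0}^{n-1} \bigl(\phi(L_{k+1}\cdot x) - P_\mu\phi(L_k\cdot x)\bigr),$$
where the second sum is a martingale $P_n$ (its $k$-th term has vanishing conditional expectation given $\mathcal{F}_k$ by definition of $P_\mu$). Setting $M_{x,n} := N_n + P_n$ produces the desired martingale; the residual term $\phi(x) - \phi(L_n\cdot x)$ is then bounded in absolute value by $2\|\phi\|_\infty$, uniformly in $n$ and in $x \in \overline{X}^h$, which is exactly the $O_{x,n}(1)$ error claimed in the lemma.

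The main obstacle is the construction of a bounded $\phi$ solving the cohomological equation. The natural candidate is the Neumann-type series $\phi := \sum_{n \geq 0} P_\mu^n u$, which converges in sup-norm as soon as one establishes $\|P_\mu^n u\|_\infty \leq C \beta^n$ for some $\beta < 1$. This is where non-elementarity and Gromov-hyperbolicity are essential: since $u$ has zero mean against any $\mu$-stationary probability measure $\nu$ on $\partial X$ (by the Furstenberg-type identity $\int\psi\,d\nu = \ell_\mu$), the required decay reduces to a uniform contraction estimate for $P_\mu$ on a suitable class of functions on $\overline{X}^h$. Obtaining this contraction \emph{uniformly in the initial horofunction} $x$ (rather than merely $\nu$-almost surely, which would suffice for a non-uniform Benoist--Quint type statement) is the delicate point; I would approach it via Maher--Tiozzo pivotal-times techniques together with the shadow lemma, which together ensure that two points $x, x' \in \overline{X}^h$ are, with probability tending to $1$ exponentially fast, mapped by $L_n$ to horofunctions whose values on the trajectory agree up to a uniformly bounded quantity. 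This is precisely the strengthening of the Benoist--Quint martingale construction that is attributed to \cite{aoun-sert} in the text.
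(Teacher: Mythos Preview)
Your decomposition is exactly the one the paper uses, only packaged differently: the paper phrases everything through the modified cocycle $\sigma_0(g,y):=\sigma(g,y)+\psi(g\cdot y)-\psi(y)$ (your $\phi$ is their $\psi$), sets $M_{x,n}:=\sigma_0(L_n,x)-n\ell_\mu$, and checks directly that the constant-drift property of $\sigma_0$ makes this a martingale. Unwinding the cocycle identity, one sees that your $N_n+P_n$ is term-by-term the same martingale, and your residual $\phi(x)-\phi(L_n\cdot x)$ is their $O_{x,n}(1)$. So at the structural level there is no daylight between the two.

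The one place your write-up overshoots is the mechanism for producing the bounded solution $\phi$ of the Poisson equation. You propose the Neumann series $\sum P_\mu^n u$ together with a sup-norm exponential contraction $\|P_\mu^n u\|_\infty\le C\beta^n$ obtained via pivotal-times arguments. The paper simply invokes the construction from \cite{BQ.hyperbolic,horbez,aoun-sert}, and those references do \emph{not} proceed via an exponential contraction estimate: under a mere second-moment assumption (which is all the lemma postulates) they use instead the uniform boundedness of the expected Gromov products $\sup_{n,x}\E[(L_n^{-1}o\,|\,x)_o]<\infty$, which already forces the partial sums $\E[\sigma(L_n,x)]-n\ell_\mu$ to stay bounded and yields $\psi$ more directly. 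Your sketch via Maher--Tiozzo pivoting and shadows is plausible and would likely give a stronger quantitative statement, but it is not the route taken in the cited sources and would need real work to turn into a proof at the level of generality required here (non-proper spaces, countable support, second moment only). Since you ultimately defer to \cite{aoun-sert} for the uniform version anyway, this is a stylistic rather than a logical gap.
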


\begin{proof}
When $X$ is proper,  Benoist--Quint   \cite[Proposition 4.6]{BQ.book} showed that that there exists a bounded measurable function $\psi$ on the Busemann boundary $\partial^h X$ such that $$\psi - P_{\mu} \psi = \int{\sigma(g,x) d\mu(g)} - \ell_\mu.$$ It was then verified in \cite{horbez} that this solution can be extended to the case when $X$ is non-proper and also in \cite{aoun-sert} that $\psi$ could be defined on the whole compactification $\overline{X}^h$ while preserving the boundedness of $\psi$.  This is equivalent to finding a cocycle $\sigma_0: G\times \overline{X}^h \to \R$ with constant drift equal to $\ell_{\mu}$, i.e.~ $$\int{\sigma_0(g,x) d\mu(g)}=\ell_{\mu}$$ for every $x\in \overline{X}^h$, such that the following identity holds for every $(g,x)\in G\times  \overline{X}^h $:  
\begin{equation}\label{cocycle.decomposition}\sigma(g,x) = \sigma_0(g,x) - \psi(g\cdot x)+\psi(x).\end{equation} 
Let then 
\begin{equation}\label{equation.martingale}M_{x,n}:=\sigma_0(L_n, x)-n\ell_{\mu}.\end{equation}
The constant drift property of $\sigma_0$ implies that $M_x:=(M_{x,n})_{n\in \N}$ is a martingale with respect to the filtration $\mathcal{F}$, which finishes the proof. 
\end{proof}

\begin{remark}\label{remark.martingale}
Observe that since $\E(M_{x,n})=\E(M_{x,0})=0$ for every $n\in \N$, we obtain the existence of some $C\geq 0$ such that for every $n\in \N$ and every $x\in M$, 
$$n\ell_\mu - C \leq \E[\sigma(L_n, x)] \leq n \ell_\mu+C.$$
\end{remark}

From now on, for every $x\in \overline{X}^h$ we denote by $M_x=(M_{x,n})_{n\in \N}$ the martingale defined in the proof of Lemma \ref{lemma.martingale.dec}, i.e. 
 $$M_{x,n}:=\sigma_0(L_n, x)-n\ell_{\mu}.$$
 
Many properties of a martingale are encoded in its different notions of   quadratic variation. For instance, a martingale whose predictable quadratic variation (see below for the definition) is almost surely bounded satisfies a Bennett--Bernstein  concentration result (see \cite{Freedman} for the bounded difference case and \cite{pena,DZ,fan.grama.liu} for the general case). Burkholder inequalities \cite{burkholder} are another instance of the relevance of the quadratic variation in studying martingales. 
 
\subsubsection{Large deviation estimate for predictable quadratic variation of $M_{x,n}$} \label{subsub.bracket.large.dev} 
We now proceed with the second goal of \S \ref{sec.large.deviation}, namely proving Proposition \ref{prop.bracket.large.dev} below and deducing Corollary \ref{corollary.laplace.bracket}. We first give some observations and definitions regarding the martingale $(M_{x,n})_{n\in \N}$ introduced in \S \ref{subsub.mart.dec}. The martingale difference of $(M_{x,n})_{n\in \N}$ is 
\begin{equation}\label{our.martingale.difference}\Delta_n M_x:=M_{x,n}-M_{x,n-1}=\sigma_o(X_n, Z_{x,n-1})-\ell_\mu,\end{equation}
 where   $(Z_{x,j}:=L_j\cdot x)$ is the Markov chain on $\overline{X}^h$  induced by the random walk on $G$ and starting at $x$. 
We recall that the (predictable) quadratic variation of $\langle M_x\rangle$ is the unique increasing predictable process  such that $(M_{x,n}^2-\langle M_x \rangle_n)_{n\in \N}$ is a martingale. We have 

\begin{equation}\label{bracket.expression}\langle M_x\rangle_n=
\sum_{j=1}^n{\E(\Delta_j M_{x}^2 | \mathcal{F}_{j-1})}=
\sum_{j=1}^n{\int{(\sigma_0(g,Z_{x,j-1}) -\ell_{\mu})^2 d\mu(g)}}.\end{equation}

We now come to the main result of this section. Its statement contains the expression  
\begin{equation}\label{eq.exp.variance}
\sigma_{\mu}^2:=\iint{(\sigma_0(g,x)-\ell_{\mu})^2 d\mu(g) d\nu(x)}
\end{equation}
where $\nu$ is any $\mu$-stationary probability measure on $\overline{X}^h$ -- we will see that the integral does not depend on $\nu$. This constant $\sigma_\mu^2$ is also the variance appearing in the central limit theorem \eqref{eq.clt} (see proof of \cite[Theorem 4.7.b]{BQ.hyperbolic} or \cite[Theorem 1.3]{horbez}). 
 
\begin{proposition}[Large deviation estimates for the quadratic variation]
\label{prop.bracket.large.dev} Let $\mu$ be a non-elementary probability measure with finite second moment. Then for every $\epsilon>0$ 
$$\limsup_{n\to \infty}\frac{1}{n} \log \sup_{x\in X}\p(| \langle M_x \rangle_n - n \sigma_{\mu}^2 | \geq n \epsilon)<0.$$
\end{proposition}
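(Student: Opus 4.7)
The plan is to represent $\langle M_x\rangle_n$ as an ergodic sum of a bounded functional along the boundary Markov chain, solve a bounded Poisson equation, and reduce to an Azuma--Hoeffding estimate for a martingale with bounded increments. Set $f(x) := \int (\sigma_0(g,x) - \ell_\mu)^2 \, d\mu(g)$, so that by \eqref{bracket.expression} we have $\langle M_x\rangle_n = \sum_{j=0}^{n-1} f(Z_{x,j})$. The first step is to check that $f$ is uniformly bounded on $\overline{X}^h$: the cohomological decomposition \eqref{cocycle.decomposition}, together with $|\sigma(g,x)| \leq \kappa(g)$ from \eqref{useful} and the boundedness of $\psi$, gives $|\sigma_0(g,x) - \ell_\mu| \leq \kappa(g) + 2\|\psi\|_\infty + \ell_\mu$. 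Hence the finite second moment of $\mu$ yields
\begin{equation*}
\|f\|_\infty \leq \int (\kappa(g) + 2\|\psi\|_\infty + \ell_\mu)^2 \, d\mu(g) < \infty.
\end{equation*}
Along the way one also verifies that the constant $\sigma_\mu^2$ in \eqref{eq.exp.variance} does not depend on the chosen $\mu$-stationary measure $\nu$, for instance by the almost sure convergence of $\frac{1}{n}\langle M_x\rangle_n$ inherited from the ergodic theorem for the Markov chain $(Z_{x,n})$.

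The crux of the argument is then to solve the Poisson equation $(I-P_\mu) h = f - \sigma_\mu^2$ with a bounded measurable function $h:\overline{X}^h \to \R$, formally as the Neumann series $h = \sum_{k\geq 0}(P_\mu^k f - \sigma_\mu^2)$. Its uniform convergence rests on an exponential decay of correlations of $P_\mu$ on a suitable subspace of bounded functions on $\overline{X}^h$. In the non-elementary regime, this quantitative mixing is the same mechanism that produces the bounded cocycle solution $\psi$ used in the proof of Lemma \ref{lemma.martingale.dec}, and can be extracted from the contraction techniques developed by Benoist--Quint, Horbez, and Aoun--Sert---the latter ensuring validity on the whole horofunction compactification rather than only on a stationary boundary piece. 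Granted such an $h$, the cocycle identity yields the decomposition
\begin{equation*}
\langle M_x\rangle_n - n\sigma_\mu^2 \;=\; h(Z_{x,0}) - h(Z_{x,n}) + \sum_{j=1}^n D_j,
\end{equation*}
where $D_j := h(Z_{x,j}) - P_\mu h(Z_{x,j-1})$ are $\mathcal{F}$-martingale differences satisfying $|D_j| \leq 2\|h\|_\infty$. The telescoping boundary term is uniformly bounded by $2\|h\|_\infty$ and is therefore negligible on the scale $n\epsilon$. Azuma--Hoeffding then yields, uniformly in $x\in X$ and for $n$ large compared with $\epsilon$ and $\|h\|_\infty$,
\begin{equation*}
\p\!\left(|\langle M_x\rangle_n - n\sigma_\mu^2| \geq n\epsilon\right) \leq 2 \exp\!\left(-\frac{n\epsilon^2}{32\|h\|_\infty^2}\right),
\end{equation*}
and taking $\limsup_n \frac{1}{n}\log$ of the supremum in $x$ produces the strict negativity claimed.

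The principal obstacle is the construction of the bounded Poisson solution $h$: the analogous statement for the linear cocycle $\sigma$ is precisely what underlies Lemma \ref{lemma.martingale.dec}, but adapting that machinery to the quadratic weight $f$ requires quantitative contraction of $P_\mu$ on a space of merely bounded (not necessarily regular) functions of $\overline{X}^h$, in the full generality of non-proper geodesic Gromov-hyperbolic spaces. If this direct Neumann-series route runs into regularity issues, an alternative is to truncate: split $f = f \wedge a + (f-f\wedge a)_{+}$ at a large level $a$, apply the bounded-difference argument above to the truncated part, and control the tail contribution $\sum_{j=0}^{n-1}(f(Z_{x,j}) - a)_{+}$ using the exponential submartingale transform of Proposition \ref{prop.submart.trans} to leverage the exponential moment assumptions available through $\sigma_0$.
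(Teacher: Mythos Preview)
Your overall plan is the natural one, but the decisive step---solving the Poisson equation $(I-P_\mu)h=f-\sigma_\mu^2$ with a \emph{bounded} $h$ via a Neumann series---is not justified in this generality, and the paper in fact deliberately avoids it. Summability of $\sum_{k\ge 0}(P_\mu^k f-\sigma_\mu^2)$ in $L^\infty(\overline{X}^h)$ amounts to an exponential mixing/spectral gap for $P_\mu$ acting on bounded measurable functions of the horofunction compactification. As the introduction stresses, this analytic input is available on trees or classical $\mathbb{H}^n$ but \emph{not} established for general (possibly non-proper) geodesic Gromov-hyperbolic spaces. The bounded $\psi$ underlying Lemma~\ref{lemma.martingale.dec} is \emph{not} produced by such a spectral mechanism: its construction (in \cite{BQ.book,BQ.hyperbolic,horbez,aoun-sert}) is specific to the linear cocycle and ultimately rests on the uniform punctual deviation estimate (cf.~Lemma~\ref{lemma.unif.punctual}), not on contraction of $P_\mu$ on a Banach space of bounded functions. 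So invoking ``the same mechanism'' for the quadratic weight $f$ is an unsupported leap. Your fallback via truncation does not help either: $f$ is already bounded, so truncation does not address the missing spectral decay, and Proposition~\ref{prop.submart.trans} controls exponential moments of a different object.

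The paper's proof replaces your Poisson-equation hypothesis by a strictly weaker and verifiable one: uniform convergence of the Ces\`aro averages $\frac{1}{n}\sum_{j=1}^n P_\mu^j\phi(x)\to\sigma_\mu^2$ (Lemma~\ref{lemma.unif.conv}). This is proved by showing $f_n(x)-f_n(y)\to 0$ uniformly, which reduces to uniform punctual deviation estimates for $\sigma(L_n,x)-\sigma(L_n,y)$ together with Burkholder-type uniform integrability of $M_{x,n}/\sqrt{n}$. Given only this Ces\`aro condition, Proposition~\ref{proposition.deviations.MC} obtains the large deviation bound by an iterated-conditioning decomposition $\sum_j(\phi(Z_j)-P^l\phi(Z_{j-l}))$ for finitely many lags $l\le m$, each handled by Azuma--Hoeffding, and then chooses $m$ large using the uniform Ces\`aro convergence. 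This bypasses the need for any bounded Poisson solution and is precisely the point of the argument.
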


To proceed to prove this result, we first observe that we can reformulate the statement as a statement about large deviations for an additive functional of a Markov chain. Indeed, for $x\in \overline{X}^h$ defining 
\begin{equation}\label{eq.def.phi}
\phi(x):=\int{(\sigma_0(g,x) -\ell_{\mu})^2 d\mu(g)}
\end{equation}
expression \eqref{bracket.expression} shows that $$\langle M_x \rangle_n=\sum_{j=1}^n{\phi(Z_{x,j})}.$$
  
Benoist--Quint showed a large deviation estimate for functionals along  Markov Chains \cite[Proposition 3.1]{BQ.CLT}, which is a quantitative refinement of Breiman's law of large numbers. In the aforementioned paper, the authors work with continuous functions in the framework of  Markov--Feller operators on compact metric spaces. However, in the generality that we work with, we were not able to prove the continuity of $\phi$. Note that by the expression \eqref{cocycle.decomposition} of the cocycle $\sigma_0$, the continuity of $\phi$ would follow from the continuity 
of the Gromov product on the Busemann compactification $\overline{X}^h$.
Up to our knowledge, the latter is known in familiar cases including trees and classical hyperbolic spaces but not in our generality  (note that by \cite[\S  10]{miyachi} the Gromov product on the Busemann compactification of a general metric space $X$ may fail to be continuous even if $X$ is proper and geodesic). 
To  overcome this issue, we will adapt the statement of Benoist--Quint by relaxing the continuity assumption.

\begin{proposition}(\cite[Proposition 3.1]{BQ.CLT} modified)\label{proposition.deviations.MC}
Let $(Z_n)_{n\in \N}$ be a Markov chain on a state space $E$, $P$ its Markov operator and $\phi: E\to \R$ a measurable bounded function. Suppose that 
\begin{equation}\label{uniform}\frac{1}{n}\sum_{j=1}^n{P^j\phi(x)}\to l_{\phi}\in \R,\end{equation} uniformly in $x\in E$. Then the following large deviation estimate holds: for every $\epsilon>0$ 
$$\limsup_{n\to +\infty}\frac{1}{n}\log \p_x(
\sum_{i=1}^n\phi(Z_i) \in [nl_{\phi} - n\epsilon , nl_{\phi} +n\epsilon])<0,
$$ 
uniformly in $x\in E$. 
\end{proposition}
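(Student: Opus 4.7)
As literally displayed, the inequality concerns the probability that the ergodic average lies \emph{inside} the typical window $[l_\phi-\eps,l_\phi+\eps]$, which under the uniform Ces\`{a}ro hypothesis is the probable event: a by-product of the argument below is in fact that $\p_x(|S_n-nl_\phi|\leq n\eps)\to 1$ exponentially fast. The only reading consistent both with the phrase ``large deviation estimate'' and with the way this proposition is invoked to deduce Proposition~\ref{prop.bracket.large.dev} is the one obtained by replacing $\in$ with $\notin$. Writing $S_n:=\sum_{i=1}^n\phi(Z_i)$, the plan below therefore addresses
\[\limsup_{n\to+\infty}\frac{1}{n}\log\sup_{x\in E}\p_x\bigl(|S_n-nl_\phi|\geq n\eps\bigr)<0.\]

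After replacing $\phi$ by $\phi-l_\phi$ I may and will assume $l_\phi=0$, and I set $M:=\sup_E|\phi|<\infty$. The strategy is a Chernoff bound. For each $\lambda>0$, the exponential Markov inequality gives $\sup_x\p_x(S_n\geq n\eps)\leq e^{-\lambda n\eps}\sup_x\E_x[e^{\lambda S_n}]$, and symmetrically for $\p_x(S_n\leq -n\eps)$ with $-\lambda$. It thus suffices to find some $\lambda>0$ (depending on $\eps$) for which $\sup_x\E_x[e^{\pm\lambda S_n}]\leq C\,e^{\lambda n\eps/2}$ for a constant $C$ and all $n$, so that the normalized log-probability of each one-sided deviation has $\limsup$ at most $-\lambda\eps/2<0$.

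The key technical step is a block argument that converts the Ces\`{a}ro assumption into a one-block moment-generating-function estimate. Fix $N$ large enough that $\bigl|\tfrac{1}{N}\sum_{j=1}^N P^j\phi(x)\bigr|\leq \eps/4$ uniformly in $x\in E$, which is permitted by the hypothesis. Set $T:=\sum_{i=1}^N\phi(Z_i)$, so that $\E_x[T]=\sum_{j=1}^N P^j\phi(x)$ satisfies $|\E_x[T]|\leq N\eps/4$ and $|T|\leq NM$. Using $e^y\leq 1+y+\tfrac{1}{2}y^2 e^{|y|}$, I obtain
\[\sup_{x\in E}\E_x[e^{\lambda T}]\;\leq\; 1+\tfrac{\lambda N\eps}{4}+\tfrac{1}{2}\lambda^2 N^2 M^2\,e^{\lambda NM},\]
and for all $\lambda$ small enough (with $N$ and $M$ now fixed) the last term is dominated by $\lambda N\eps/4$, giving $\sup_x\E_x[e^{\lambda T}]\leq 1+\lambda N\eps/2\leq e^{\lambda N\eps/2}$. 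Iterating along blocks of length $N$ by conditioning on $\mathcal{F}_N$ and invoking the Markov property,
\[\E_x\bigl[e^{\lambda S_{kN}}\bigr]=\E_x\Bigl[e^{\lambda T}\,\E_{Z_N}\bigl[e^{\lambda S_{(k-1)N}}\bigr]\Bigr]\leq e^{\lambda N\eps/2}\,\sup_{y\in E}\E_y\bigl[e^{\lambda S_{(k-1)N}}\bigr],\]
so induction yields $\sup_x\E_x[e^{\lambda S_{kN}}]\leq e^{\lambda kN\eps/2}$; writing a general $n$ as $kN+r$ with $0\leq r<N$ absorbs $e^{\lambda rM}$ into a bounded multiplicative constant. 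The case of $-\lambda$ is identical.

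The main obstacle, and the very reason for the word ``modified'' in the heading, is that Benoist--Quint's original proof is carried out in a Markov--Feller framework with $\phi$ continuous, whereas here the function $\phi$ of \eqref{eq.def.phi} is only known to be measurable on $\overline{X}^h$. The block-Chernoff route above was chosen precisely to bypass any topology on $E$: only measurability and boundedness of $\phi$, together with the explicit uniform Ces\`{a}ro control, are ever invoked. Verifying that no continuity of $\phi$ (nor Feller property of $P$) enters any step is then the one nontrivial observation required for the adaptation.
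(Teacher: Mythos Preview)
Your proof is correct, and you are right that the displayed statement must be read with $\notin$ rather than $\in$; this is exactly how the proposition is used to derive Proposition~\ref{prop.bracket.large.dev}. Your block-Chernoff argument is clean: the one-block MGF bound via $e^y\leq 1+y+\tfrac12 y^2 e^{|y|}$ together with the uniform Ces\`aro control of $\E_x[T]$, followed by iteration through the Markov property, goes through without issue, and handling the remainder $r<N$ by the crude bound $e^{\lambda rM}$ is fine since it contributes nothing after dividing by $n$.

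The paper takes a different route. Rather than bounding an MGF directly, it proves the Azuma--Hoeffding-type estimate
\[
\p\Bigl(\Bigl|\sum_{j=1}^n\bigl(\xi_j-\E[\xi_j\mid\mathcal{F}_{j-1}]\bigr)\Bigr|\geq n\eps\Bigr)\leq 2\exp\bigl(-n\eps^2/8\|\xi\|_\infty^2\bigr)
\]
for any bounded adapted sequence, and then uses the telescoping identity
\[
\sum_{j}\bigl(\phi(Z_j)-P^l\phi(Z_{j-l})\bigr)=\sum_{k=0}^{l-1}\sum_j\bigl(\E[\phi(Z_j)\mid\mathcal{F}_{j-k}]-\E[\phi(Z_j)\mid\mathcal{F}_{j-k-1}]\bigr),
\]
applying Azuma--Hoeffding to each of the $l$ inner martingale-difference sums. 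Averaging over $l=1,\ldots,m$ shows that $\tfrac1n\sum_j\phi(Z_j)$ concentrates around $\tfrac1m\sum_{l=1}^m P^l\phi(Z_j)$, and then the uniform convergence hypothesis pins this Ces\`aro average near $l_\phi$. So the paper's mechanism is ``martingale concentration $+$ telescoping over the Markov filtration'', whereas yours is ``one-block MGF $+$ block iteration via the Markov property''. Both rely only on boundedness and measurability of $\phi$, which, as you note, is the point of the modification; your argument is somewhat more elementary in that it avoids martingale inequalities entirely, while the paper's approach yields an explicit exponent $\eps^2/8\|\phi\|_\infty^2$ independent of the rate in the uniform convergence.
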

The proof is an adaptation of Benoist--Quint's proof of \cite[Proposition 3.1]{BQ.CLT}. We include it for the convenience of the reader.
\begin{proof}
We begin with  a general result, which can be seen as a quantitative version of the law of large numbers stated in \cite[Theorem 1.6]{BQ.book}.  If $(\xi_n)_{n\in \N}$ is a sequence of bounded  real random variables adapted to a filtration $\{\mathcal{F}_n \, | \, n\in \N\}$ then 
\begin{equation}\label{inter}\p(|\sum_{j=1}^n( \xi_j - \E[\xi_j| \mathcal{F}_{j-1}]| \geq n\epsilon)\leq 2 \exp(-n\epsilon^2/8||\xi||_{\infty}^2).\end{equation}
Indeed the sequence $\{\xi_n-\E[\xi_n| \mathcal{F}_{n-1} ] \, |\, n\in \N\}$ is a bounded martingale difference sequence with respect to the filtration $\mathcal{F}$ and hence \eqref{inter} follows from    from Azuma--Heoffding's concentration inequality for martingales with bounded differences.

In the second step, we show that for every $m\in \N$,  $\frac{1}{n}\sum_{j=1}^n{\phi(Z_j)}$ concentrates around the Ces\`{a}ro average  $\frac{1}{m}\sum_{j=1}^m{P^j \phi(Z_j)}$; more precisely for every $\epsilon>0$, $n,m\in \N$,
\begin{equation}\label{toshow}
\p(|\sum_{j=1}^{n}
{[\phi(Z_{j})-\frac{1}{m}\sum_{l=1}^mP^l\phi(Z_{j})}]| \geq m n \epsilon+2m||\phi||_{\infty})\leq 2 m^2 
 \exp(-n\epsilon^2/8||\phi||_{\infty}^2).\end{equation}
Indeed, let $1\leq l \leq m$. We write 
$$\sum_{j=l+1}^{l+n}
{(\phi(Z_{j})-\E[\phi(Z_j |  \mathcal{F}_{j-l})])}=
\sum_{k=0}^{l-1} \sum_{j=l+1}^{l+n}{
(\E[\phi(Z_j)| \mathcal{F}_{j-k}]-
\E[\phi(Z_j)|
\mathcal{F}_{j-k-1}]}),
$$
where $\mathcal{F}$ is the filtration induced by the Markov chain. 
For each $k\in \{0,\cdots, l-1\}$,  we apply \eqref{inter} with 
the sequence 
of random variables 
$\{\xi_{j,k}=\E[\phi(Z_j)| \mathcal{F}_{j-k}] \, | \, j\in \N\}$  which are adapted  to the filtration 
$\{\mathcal{F}_{j-k}|j\in \N\}$ and bounded by $||\phi||_{\infty}$. Combining the resulting $l$ estimates, we obtain that  
$$\p(|\sum_{j=l+1}^{l+n}
(\phi(Z_{j})-\E[\phi(Z_j)|\mathcal{F}_{j-l}]) \geq l n \epsilon)\leq 2 l 
\exp(-n\epsilon^2/8||\phi||_{\infty}^2).$$
Noticing that $\E[\phi(Z_j)|\mathcal{F}_{j-l}]=P^l\phi(Z_{j-l})$, the previous estimate gives (after killing the boundary terms using the boundedness of $\phi$) that  
$$\p(|\sum_{j=1}^{n}
{[\phi(Z_{j})-P^l\phi(Z_{j})}| \geq l n \epsilon+2l||\phi||_{\infty})\leq 2l 
\exp(-n\epsilon^2/8||\phi||_{\infty}^2).$$
Estimate \eqref{toshow} immediately follows.

Finally, we use the uniform convergence \eqref{uniform} in order to conclude.  Indeed, the latter condition yields an integer   $m_0$ such that almost surely for every $j\in\N$, 
\begin{equation}\label{uniform.consequence}\frac{1}{m_0}\sum_{l=1}^{m_0} {P^l\phi(Z_{j})}\in [l_{\phi}-\epsilon, l_{\phi}+\epsilon].\end{equation} 
Plugging \eqref{uniform.consequence} into \eqref{toshow} with this $m_0$ immediately finishes  the proof. \end{proof}
 
\begin{remark}
If $E$ is a compact metric space, $P$ a Markov Feller operator and $\phi$ is a continuous function which has a unique average with respect to stationary measures on $E$, then \eqref{uniform} is fulfilled. As mentioned earlier, this is the case, for instance, for  random walks on trees, classical hyperbolic spaces and also for strongly irreducible and proximal random walks on projective spaces (see for instance \cite{BQ.book}). 
\end{remark}

We now check that \eqref{uniform} is satisfied for our function $\phi$ defined in \eqref{eq.def.phi} and the Markov operator $P=P_\mu$ of the Markov chain on $\overline{X}^h$ induced by the random walk on $G$ (see  \S \ref{sec.large.deviation}). 

\begin{lemma}\label{lemma.unif.conv}
Let $\phi:\overline{X}^h \to \R$ as defined in \eqref{eq.def.phi}. Then the sequence of functions $f_n(x):=\frac{1}{n}\sum_{j=1}^n{P^j_\mu \phi(x)}$ converges uniformly on $\overline{X}^h$ to $\sigma^2_\mu\geq 0$. The limit $\sigma_{\mu}^2$ can be expressed as 
\begin{equation}\label{def.variance}\sigma_{\mu}^2:=\iint{(\sigma_0(g,x)-\ell_{\mu})^2 d\mu(g) d\nu(x)},
\end{equation}
where $\nu$ is any $\mu$-stationary measure on $\overline{X}^h$. 
\end{lemma}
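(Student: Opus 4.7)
The plan is to identify the Ces\`aro average $f_n(x)$ with the normalized second moment of the martingale $M_{x,n}$ from Lemma~\ref{lemma.martingale.dec}, and then deduce the limit from the central limit theorem together with strong uniform higher-moment estimates provided by the finite exponential moment of~$\mu$.

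First I would establish that $\phi$ is bounded on $\overline{X}^h$: from the decomposition \eqref{cocycle.decomposition} with $\psi$ bounded and the inequality $|\sigma(g,x)|\leq \kappa(g)$ in \eqref{useful}, one has $|\sigma_0(g,x)-\ell_\mu|\leq \kappa(g)+C$ for some constant $C$ depending only on $\mu$; the finite second moment of $\mu$ then gives $\|\phi\|_\infty<\infty$. Next, using that the conditional variance of $\Delta_j M_x$ is $\phi(Z_{x,j-1})$ (cf.~\eqref{bracket.expression}), the orthogonality of martingale increments yields
\begin{equation*}
\E[M_{x,n}^2]=\E[\langle M_x\rangle_n]=\sum_{j=0}^{n-1}P_\mu^j\phi(x),
\end{equation*}
so that $f_n(x)=\tfrac{1}{n}\E[M_{x,n}^2]+O(1/n)$ uniformly in $x$. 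It is therefore enough to prove $\tfrac{1}{n}\E[M_{x,n}^2]\to\sigma_\mu^2$ uniformly in $x\in \overline{X}^h$.

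Exploiting the finite exponential moment of $\mu$ and the boundedness of $\psi$, one obtains $\sup_{x,j}\E[|\Delta_j M_x|^p\mid \mathcal{F}_{j-1}]\leq C_p$ almost surely for every $p<\infty$. Combining the Burkholder--Davis--Gundy inequality with Jensen applied to the quadratic variation then gives
\begin{equation*}
\sup_{x\in\overline{X}^h,\,n\geq 1}\E\left[\left|\tfrac{M_{x,n}}{\sqrt{n}}\right|^p\right]<\infty
\end{equation*}
for every $p$, hence uniform integrability of $\{M_{x,n}^2/n\}_{x,n}$. Together with the CLT $M_{x,n}/\sqrt{n}\xrightarrow{\mathrm{law}}\mathcal{N}(0,\sigma_\mu^2)$ of \cite{BQ.hyperbolic,horbez} this gives pointwise convergence $\E[M_{x,n}^2]/n\to\sigma_\mu^2$. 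Uniformity in $x$ is then obtained by a contradiction/tightness argument: if it failed along some sequence $(x_k,n_k)$, tightness of $\{M_{x_k,n_k}/\sqrt{n_k}\}_k$ (from the moment bounds) combined with the CLT and the uniqueness of the limit $\mathcal{N}(0,\sigma_\mu^2)$ produces a contradiction.

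The identification $\sigma_\mu^2=\int\phi\,d\nu$ for any $\mu$-stationary $\nu$ on $\overline{X}^h$ follows from stationarity: $\int f_n\,d\nu=\int\phi\,d\nu$ for every $n$, and passing to the uniform limit yields the common value $\sigma_\mu^2$. I expect the main obstacle to lie precisely in the uniform-in-$x$ convergence: since the coboundary $\psi$ of \eqref{cocycle.decomposition} need not be continuous in our generality, the law of $M_{x,n}$ depends on $x$ in a discontinuous way and one cannot deduce the uniform CLT from its pointwise version by standard methods; the strong uniform higher-moment control afforded by the finite exponential moment assumption is precisely what allows the tightness argument to bypass this absence of regularity.
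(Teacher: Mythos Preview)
Your identification $f_n(x)=\tfrac{1}{n}\E[M_{x,n}^2]$ and the uniform $L^p$ bounds via Burkholder are correct and are exactly what the paper uses. The pointwise convergence $\tfrac{1}{n}\E[M_{x,n}^2]\to\sigma_\mu^2$ via CLT plus uniform integrability is also fine. The gap is in your uniformity step.

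The tightness/contradiction argument you sketch does not work as stated. Tightness of $\{M_{x_k,n_k}/\sqrt{n_k}\}_k$ only provides a subsequential distributional limit; it does \emph{not} identify that limit as $\mathcal{N}(0,\sigma_\mu^2)$. The CLT of \cite{BQ.hyperbolic,horbez} is proved for each fixed $x$, and nothing in those statements tells you what happens along a sequence where $x_k$ moves with $n_k$. You yourself flag that the dependence of the law of $M_{x,n}$ on $x$ is discontinuous, and then assert that the higher-moment control ``bypasses'' this --- but moment bounds give tightness and uniform integrability, not identification of limits. So as written you are missing exactly the ingredient that makes the argument go through.

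The paper closes this gap differently: rather than proving a uniform CLT, it shows directly that $f_n(x)-f_n(y)\to 0$ uniformly in $x,y$, and then integrates against a stationary measure to pin down the common limit as $\sigma_\mu^2$. The key extra input is Lemma~\ref{lemma.unif.punctual} (uniform punctual deviations, coming from \cite[Proposition~2.12]{BMSS}), which gives constants $C,\beta>0$ with $\p(|\sigma(L_n,x)-\sigma(L_n,y)|>T)\leq Ce^{-\beta T}$ uniformly in $n$ and $x,y\in\overline{X}^h$. Combined with the uniform $L^4$ bound and a Cauchy--Schwarz split into a small-probability piece and a piece where $|M_{x,n}-M_{y,n}|/\sqrt{n}$ is small, this yields $|f_n(x)-f_n(y)|\to 0$ uniformly. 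This geometric comparison between different basepoints is precisely the substitute for the continuity of $\phi$ (or of $\psi$) that is unavailable in this generality, and it is the piece your proposal is missing.
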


\begin{remark}\label{rk.sigma.positive}
It follows from \eqref{def.variance} and the argument in the proof of \cite[Theorem 4.7.b]{BQ.hyperbolic} that $\sigma_\mu=0$ if any only if there exists a constant $C>0$ such that for every $n \in \N$ and $g\in \supp(\mu^{\ast n})$, we have $|\kappa(g)-n\ell_{\mu}|\leq C$. It follows that $\sigma_\mu>0$ if any only if $\mu$ is non-arithmetic.
Here, a probability measure $\mu$ on $\Isom(X)$ is said to be non-arithmetic if there exists $n \in \N$ and $g,g' \in \supp(\mu^{\ast n})$ such that $\tau(g) \neq \tau(g')$ where $\tau$ is the translation distance, $\tau(g)=\lim_{n \to \infty}\frac{\kappa(g^n)}{n}$.
\end{remark}


The proof of the previous lemma is based on showing that $f_n(x)-f_n(y)$ converges uniformly to zero (see \eqref{expression}), which imposes the limit to be the average $\sigma_{\mu}^2$ as defined in \eqref{def.variance}. To prove this, we express   $f_n(x)$ as the variance of  $\frac{M_{x,n}}{\sqrt{n}}$ (see \eqref{identity1}). Using Burkholder's inequalities, the proof boils down to showing deviation  inequalities for $\sigma(L_n,x)-\sigma(L_n,y)$ uniformly in $x,y\in \overline{X}^h$ (see \eqref{eq.botim1}). For the latter fact, we will use the following lemma  which is a direct consequence of uniform punctual deviation estimates given in \cite[Proposition 2.12]{BMSS}.

\begin{lemma}[Uniform punctual deviations]\label{lemma.unif.punctual}
Keep the hypotheses of Proposition \ref{prop.bracket.large.dev}. Then there are constants $C, \beta >0$ such that for any $k \in \N$ and any $x \in \overline{X}^h$, $R>0$ we have 
$$
\mathbb{P}( \kappa(L_k)-\sigma(L_k,x) > R) \leq C e^{-\beta R}.
$$
\end{lemma}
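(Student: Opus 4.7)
The plan is to reduce the statement to a direct application of the uniform punctual deviation estimate of Boulanger--Mathieu--Sert--Sisto \cite[Proposition 2.12]{BMSS}, as the authors explicitly announce in the sentence preceding the lemma. The reduction will proceed in three short steps.

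First, I would rewrite $\kappa(L_k)-\sigma(L_k,x)$ as twice a Gromov product between $x$ and the \emph{inverse-walk} endpoint. Using the extension \eqref{gromov.extended.space.boundary} of the Gromov product to the horofunction compactification, applied to the point $y=L_k^{-1}\cdot o \in X$, together with the definition $\sigma(L_k,x)=h_x(L_k^{-1}\cdot o)$ of the Busemann cocycle and the isometry-invariance identity $d(L_k^{-1}\cdot o,o)=\kappa(L_k)$, I obtain
$$
(x\,|\,L_k^{-1}\cdot o)_o \;=\; \tfrac{1}{2}\bigl(d(L_k^{-1}\cdot o, o) - h_x(L_k^{-1}\cdot o)\bigr) \;=\; \tfrac{1}{2}\bigl(\kappa(L_k)-\sigma(L_k,x)\bigr).
$$
Consequently the event $\{\kappa(L_k)-\sigma(L_k,x)>R\}$ coincides with the event $\{2(x\,|\,L_k^{-1}\cdot o)_o>R\}$.

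Second, I would pass to a left walk by inversion. Since $L_k=X_k\cdots X_1$, one has $L_k^{-1}=X_1^{-1}\cdots X_k^{-1}$, which is distributed as $\check L_k:=\check X_k\cdots\check X_1$, where $\check X_i$ are i.i.d.\ of common law $\check\mu$, the pushforward of $\mu$ by the inversion map $g\mapsto g^{-1}$. The measure $\check\mu$ is still non-elementary (the set of loxodromic elements and the notion of independent pair are both preserved under inversion) and inherits from $\mu$ all moment assumptions.

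Third, I would apply \cite[Proposition 2.12]{BMSS} to the walk driven by $\check\mu$: it provides constants $C,\beta>0$, uniform in $k\in\N$ and in the reference horofunction $x\in\overline{X}^h$, such that $\mathbb{P}(2(x\,|\,\check L_k\cdot o)_o>R)\le Ce^{-\beta R}$. Combined with the equality in law $L_k^{-1}\stackrel{d}{=}\check L_k$, this gives exactly the bound in the statement. The only point requiring a small check is that the estimate of \cite[Proposition 2.12]{BMSS} is indeed uniform in $x$ ranging over the whole compactification $\overline{X}^h$ and not merely over $X$; this is the form in which the Gromov product is extended in \eqref{gromov.extended.space.boundary} and is used throughout this paper and \cite{BMSS}, and is the main (very mild) technical point of the reduction.
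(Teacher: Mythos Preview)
Your first two steps are correct and coincide with the paper's argument: the identity $\kappa(L_k)-\sigma(L_k,x)=2(x\,|\,L_k^{-1}\cdot o)_o$ via \eqref{gromov.extended.space.boundary}, and the passage to the inverse walk driven by $\check\mu$, are exactly how the paper begins.

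The gap is in your third step. You assert that \cite[Proposition 2.12]{BMSS} already yields the bound uniformly over $x\in\overline{X}^h$, and you call this a ``very mild'' check that is ``used throughout this paper and \cite{BMSS}''. This is not accurate: as the paper's own proof makes explicit, \cite[Proposition 2.12]{BMSS} is stated only for $x\in X$, and extending it to the horofunction compactification is precisely the substance of the present lemma. The paper carries this out by choosing $x_n\in X$ with $x_n\to x$ in $\overline{X}^h$, using the continuity of $\sigma(g,\cdot)$ to write $\kappa(g)-\sigma(g,x)=\lim_{n\to\infty}2(g^{-1}o\,|\,x_n)_o$, and then applying dominated convergence to pass the limit through the probability, so that the uniform constants from \cite[Proposition 2.12]{BMSS} for points in $X$ transfer to $x\in\overline{X}^h$. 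Note that one cannot simply invoke continuity of the extended Gromov product here (it may fail in this generality, as the paper points out before Proposition~\ref{proposition.deviations.MC}); one really needs the approximation-plus-dominated-convergence step. Your proposal identifies the right reduction but stops short of supplying this argument, which is the actual content of the proof.
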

\begin{proof}

Notice that for $g\in \Isom(M)$ and  $x \in \overline{X}^h$, by \eqref{gromov.extended.space.boundary} we have $\kappa(g)-\sigma(g,x)=2(g^{-1}o,x)$. In particular, when $x \in X$, the statement  precisely corresponds to \cite[Proposition 2.12]{BMSS} applied with the image $\check{\mu}$ of $\mu$ by the map $g \mapsto g^{-1}$ on $\Isom(X)$. To extend it to $\overline{X}^h$, given $x \in \overline{X}^h$, let $x_n$ be a sequence in $X$ such that $x_n \to x$ in $\overline{X}^h$. In particular, by continuity of $\sigma(g,\cdot)$,  we have $\kappa(g)-\sigma(g,x)=\lim_{n \to \infty}2(g^{-1}o,x_n)$. Therefore, given $R>0$, 
\begin{equation}\label{eq.tofatou}
\mu^{\ast k}\{g : \kappa(g)-\sigma(g,x) >R \}=\check{\mu}^{\ast k} \{g : \lim_{n \to \infty}(go,x_n) >R/2\},
\end{equation}
for every $k \in \N$. Denoting by $h_n(.)$, the map $g \mapsto \mathds{1}_{(go,x_n)>R/2}$, by \eqref{eq.tofatou} we have
$$
\mathbb{P}( \kappa(L_k)-\sigma(L_k,x) > R)=\int \lim_{n \to \infty}h_n(g) d\check{\mu}^{\ast k}(g)=\lim_{n \to \infty} \check{\mu}^{\ast k} \{g: (go,x_n) >R/2\},
$$
where we used dominated convergence in the last equality.
Hence the statement follows from \cite[Proposition 2.12]{BMSS}.
\end{proof}

\begin{proof}[Proof of Lemma \ref{lemma.unif.conv}]
First, we reduce the problem to showing that \begin{equation}\label{uniform.distance}
f_n(x)-f_n(y)\to 0
\end{equation} 
uniformly in $x$ and $y$ in $\overline{X}^h$. Indeed, let us assume for a while that \eqref{uniform.distance} holds.  Fix any $\mu$-stationary measure $\nu$ on $\overline{X}^h$ (the latter exists by compactness of $\overline{X}^h$). We have  for every $n\in \N$, 
\begin{equation}\label{expression}f_n(x)=\frac{1}{n}\sum_{j=1}^n{\E[\phi(Z_j)| Z_0=x]}
= \frac{1}{n}\sum_{j=1}^n{\E[\int{(\sigma_0(g, L_{j} \cdot x)-\ell_{\mu})^2 d\mu(g)}]}.
\end{equation}
Since $\nu$ is $\mu^{\ast n}$-stationary for every $n\in \N$, we deduce that for every $n\in \N$, 
$$
\int_{\overline{X}^h}{f_n(x) d\nu(x)}=
\iint_{G\times \overline{X}^h}{(\sigma_0(g,x) - \ell_{\mu})^2 d\mu(g) d\nu(x)}:=\sigma^2_{\mu,\nu}.
$$
Let $\epsilon>0$ and $y\in \overline{X}$. We can find $n_0$ depending only on $\epsilon$ such that for every $n\geq n_0$ and   for every $x\in \overline{X}^h$, $f_n(x)-\epsilon\leq f_n(y)\leq f_n(x)+\epsilon$. Integrating on both sides with respect $d\nu(x)$, we obtain that $|f_n(y)-\sigma_{\mu,\nu}^2|\leq \epsilon$ for every $n\geq n_0$, concluding the   proof of the uniform convergence of the sequence functions  $(f_n)_{n\in \N}$ towards $\sigma_{\mu,\nu}^2$. It also shows that $\sigma^2_{\mu, \nu}$ is independent of the choice of the stationary measure $\nu$. 

From now on, we focus on showing the convergence \eqref{uniform.distance} uniformly in $x,y\in \overline{X}^h$. Since $(M_{x,n}^2-\langle M_x\rangle_n)_{n\in \N}$ is a martingale starting at zero, we have that  $\E[\langle M_x\rangle_n]=\E[M_{x,n}^2]$ for every $n\in \N$ so that by \eqref{expression}: 
\begin{equation}\label{identity1}
f_n(x)=\frac{1}{n}\E[M_{x,n}^2]=\E\left[\left( \frac{M_{x,n}}{\sqrt{n}}\right)^2\right]
\end{equation}
Let us check that the sequence  $\{(\frac{M_{x,n}}{\sqrt{n}})^2 \, | \, n\in \N, x\in \overline{X}^h\}$ is uniformly bounded in $L^p$ for every $p> 1$; and hence in particular uniformly integrable. Indeed, 
by Burkholder's inequality (\cite[Theorem 9]{burkholder}), we have 
for every $k>2$, 
\begin{equation}\label{eq.apply.burkholder}
\E[|M_{x,n}|^k]\leq C_k \E[[M_x]_n^{\frac{k}{2}}],
\end{equation}
where $C_k>0$ is a constant depending only on $k$, $[M_x]_n=\sum_{j=1}^n{(\Delta_j M_{x})^2}$ is the quadratic variation of $M_x$.
By Jensen's inequality, we have 
$$[M_x]_n^{\frac{k}{2}}\leq n^{\frac{k}{2}-1}\sum_{j=1}^n{|\Delta_j M_{x}|^k},
$$
so that 
\begin{equation}\label{intt1}
\E[[M_x]_n^{\frac{k}{2}}]\leq 
n^{\frac{k}{2}-1}\sum_{j=1}^n{\E[|\Delta_j  M_{x}|^k]}\leq n^{\frac{k}{2}} 
\sup_{j\in \N}{\E[|\Delta_j M_{x}|^k}].
\end{equation}
Remembering that $\sigma(g,x)=\sigma_0(g,x)-\psi(g\cdot x)+\psi(x)$, $|\sigma(g,x)|\leq \kappa(g)$,  and that $\psi$ is bounded on $\overline{X}^h$, we get that for every $n\in \N$, 
\begin{equation}\label{intt2}
|\Delta_n M_{x}|=|\sigma_0(X_n, L_{n-1}\cdot x)-\ell_{\mu}|\leq \kappa(X_n) + \ell_{\mu} +2||\psi||_{\infty},
\end{equation}
Since the $X_i$'s have the same distribution, by plugging \eqref{intt2} and 
\eqref{intt1} in \eqref{eq.apply.burkholder} we get
$$
\E[M_{x,n}^k]\leq C_k n^{\frac{k}{2}}\E[|\kappa(X_1) + \ell_{\mu} +2||\psi||_{\infty}|^k].
$$ 
The right-hand-side is finite (since $\mu$ has a finite moment of any order $k>2$) and does not depend neither on $n$ nor on $x$, showing the boundedness in $L^{k/2}$ of $(\frac{M_{x,n}}{\sqrt{n}})^2$ uniformly in $n$ and $x$.  

Let now $\epsilon>0$. It follows from the uniform integrability of the family $\frac{M_{x,n}}{\sqrt{n}}$ that there exists $L(\epsilon)>0$ such that for every $n\in \N$ and  $x,y\in \overline{X}^h$ we have
$$
\p( \max\{\frac{|M_{x,n}|}{\sqrt{n}}, \frac{|M_{y,n}|}{\sqrt{n}}\}>L(\epsilon))<\epsilon.
$$
Using Lemma \ref{lemma.unif.punctual} together with the fact that $\sigma$ differs from $\sigma_0$ from a bounded function on $\overline{X}^h$, we obtain some $T(\epsilon)>0$ such that for every $n\in \N$,  $x,y\in \overline{X}^h$, 
\begin{equation}\label{eq.botim1}
\p(|\sigma_0(L_n, x)-\sigma_0(L_n,y)|>T(\epsilon))\leq \epsilon.
\end{equation}
Hence, if  $A_{x,y,n}$ denotes  the event 
$$
A_{x,y,n}:= \left\{|\sigma_0(L_n, x)-\sigma_0(L_n, y)|>T(\epsilon) \right\} \cup   \left\{\max\{\frac{|M_{x,n}|}{\sqrt{n}}, \frac{|M_{y,n}|}{\sqrt{n}}\}> L(\epsilon)\right\},
$$ we have for every $n\in \N$, $x,y\in \overline{X}^h$ that 
$\P(A_{x,y,n})<2\epsilon$. Now we write 
$$
|f_n(x)-f_n(y)|\leq   \underset{a_{x,y,n}}{\underbrace{ \E\left[\left|(\frac{M_{x,n}}{\sqrt{n}})^2 - (\frac{M_{y,n}}{\sqrt{n}})^2\right|\mathds{1}_
{A_{x,y,n}}\right]}}  +  \underset{b_{x,y,n}}{\underbrace{ \E\left[\left|(\frac{M_{x,n}}{\sqrt{n}})^2 - (\frac{M_{y,n}}{\sqrt{n}})^2\right|\mathds{1}_{A_{x,y,n}^C}\right]}}.
$$
Let us estimate $a_{x,y,n}$. By Cauchy--Schwarz inequality, we have for every $n\in \N$, 
$$
a_{x,y,n}^2 \leq 2\max_{x,y}{\E[(\frac{M_{x,n}}{\sqrt{n}})^4]} \,\p(A_{x,y,n})\leq C_4\epsilon,
$$
where $C_4>0$ is a constant independent of $n, x$ and $\epsilon$;   guaranteed by the uniform boundedness in $L^4$ shown at the beginning of the proof. Finally, we estimate $b_{x,y,n}$. Since the function $x\mapsto x^2$ is uniformly continuous on $[-L(\epsilon), L(\epsilon)]$, we can find $\delta(\epsilon)>0$ such that $|t^2-t'^2|<\epsilon$ whenever $|t-t'|<\delta(\epsilon)$ and $\max\{|t|,|t'|\}\leq L(\epsilon)$. Let $n_0(\epsilon)\in \N$ be such that $\frac{T(\epsilon)}{\sqrt{n_0(\epsilon)}}<\delta(\epsilon)$. From the definition of the event $A_{x,y,n}^C$, we deduce that  for every $n\geq n_0(\epsilon)$,  $x,y\in \overline{X}^h$, $b_{x,y,n}<\epsilon$. 
Hence for $n\geq n_0(\epsilon)$, $x,y\in \overline{X}^h$
$$
|f_n(x)-f_n(y)|< \sqrt{C_4 \epsilon}+\epsilon,
$$
which finishes the proof of the uniform convergence \eqref{uniform.distance}. \end{proof}

We end this section with the following consequence of Proposition \ref{prop.bracket.large.dev}. 
In the statement below, for every $x\in \overline{X}^h$, we use the transform  $G_n^a$ introduced in \eqref{transform.G} associated to the martingale $M_x$. To ease the notation, we omit the dependence on $x$ in $G_n^a$.

\begin{corollary}\label{corollary.laplace.bracket}
Suppose $\mu$ has finite exponential moment.  Then for every $x\in \overline{X}^h$,
$$\lim_{a\to +\infty}\limsup_{\lambda\to 0^+}\frac{1}{\lambda} \limsup_{n\to +\infty}\frac{1}{n}\log {\E[\exp(-\lambda (G_n^a - n \sigma_{\mu}^2))]}= 0.$$
\end{corollary}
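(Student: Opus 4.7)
The plan is to decompose $G_n^a = \langle M_x\rangle_n - T_n^a - B_n$, where $T_n^a := \sum_{i=1}^n \E[(\Delta_i M_x)^2\mathds{1}_{|\Delta_i M_x|>a}\mid\mathcal{F}_{i-1}]$ is the ``truncation tail'' of the predictable bracket and $B_n := a\sum_{i=1}^n |\Delta_i M_x|\mathds{1}_{|\Delta_i M_x|\geq a}$. Using the uniform domination $|\Delta_i M_x|\leq \kappa(X_i)+C$ (with $C=\ell_\mu+2\|\psi\|_\infty$) from \eqref{intt2} and the independence of $X_i$ from $\mathcal{F}_{i-1}$, I obtain the pointwise bound $T_n^a \leq n\eta(a)$ with $\eta(a):=\E[(\kappa(X_1)+C)^2 \mathds{1}_{\kappa(X_1)+C>a}]\to 0$ as $a\to\infty$ by the exponential moment hypothesis. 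Cauchy--Schwarz then yields
$$\E[e^{-\lambda(G_n^a - n\sigma_\mu^2)}] \leq e^{\lambda n\eta(a)}\sqrt{\E[e^{-2\lambda(\langle M_x\rangle_n - n\sigma_\mu^2)}]}\sqrt{\E[e^{2\lambda B_n}]},$$
and the problem reduces to estimating the two expectations on the right.

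For the bracket factor, since $\langle M_x\rangle_n\geq 0$ (hence $n\sigma_\mu^2-\langle M_x\rangle_n\leq n\sigma_\mu^2$), splitting the expectation along the event $\{\langle M_x\rangle_n\geq n(\sigma_\mu^2-\epsilon)\}$ and applying Proposition \ref{prop.bracket.large.dev} gives
$$\limsup_{n\to\infty}\tfrac{1}{n}\log\E[e^{-2\lambda(\langle M_x\rangle_n - n\sigma_\mu^2)}]\leq \max\bigl(2\lambda\epsilon,\ 2\lambda\sigma_\mu^2-\gamma(\epsilon)\bigr),$$
where $\gamma(\epsilon)>0$ is the exponential rate supplied by that proposition; for $\lambda<\gamma(\epsilon)/(2\sigma_\mu^2)$ this reduces to $2\lambda\epsilon$. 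For the $B_n$ factor, I exploit the i.i.d.\ structure of the increments: conditioning on $\mathcal{F}_{i-1}$ and again using $|\Delta_i M_x|\leq \kappa(X_i)+C$ with $X_i$ independent of $\mathcal{F}_{i-1}$,
$$\E\bigl[\exp(2\lambda a|\Delta_i M_x|\mathds{1}_{|\Delta_i M_x|\geq a})\,\big|\,\mathcal{F}_{i-1}\bigr]\leq 1+r(\lambda,a),\qquad r(\lambda,a):=\E[(e^{2\lambda a(\kappa(X_1)+C)}-1)\mathds{1}_{\kappa(X_1)+C\geq a}],$$
which is finite whenever $2\lambda a<\alpha$. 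Iterating the tower property yields $\tfrac{1}{n}\log\E[e^{2\lambda B_n}]\leq r(\lambda,a)$, and dominated convergence (via $e^y-1\leq ye^y$) gives $r(\lambda,a)/\lambda\to 2a\rho(a)$ as $\lambda\to 0^+$, where $\rho(a):=\E[(\kappa(X_1)+C)\mathds{1}_{\kappa(X_1)+C\geq a}]$.

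Assembling the three estimates, for $\lambda$ small enough
$$\tfrac{1}{\lambda}\limsup_{n\to\infty}\tfrac{1}{n}\log\E[e^{-\lambda(G_n^a-n\sigma_\mu^2)}] \leq \eta(a)+\epsilon+\tfrac{r(\lambda,a)}{2\lambda},$$
so letting $\lambda\to 0^+$ and then $\epsilon\to 0$ yields $\limsup_{\lambda\to 0^+}\tfrac{1}{\lambda}\limsup_n\cdots\leq \eta(a)+a\rho(a)$, and the inequality $a\mathds{1}_{\kappa+C\geq a}\leq(\kappa+C)\mathds{1}_{\kappa+C\geq a}$ combined with the finite second moment (implied by the exponential moment) forces $a\rho(a)\to 0$, producing the required upper bound of $0$ in the limit $a\to\infty$. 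The matching lower bound is routine from Jensen's inequality applied to $e^{-\lambda(\cdot)}$ together with $\E[\langle M_x\rangle_n]/n\to\sigma_\mu^2$ (Lemma \ref{lemma.unif.conv}) and $\E[T_n^a]/n\leq \eta(a)$, $\E[B_n]/n\leq a\rho(a)$. The principal difficulty is the $B_n$ factor: since $|\Delta_i M_x|\mathds{1}_{|\Delta_i M_x|\geq a}$ is unbounded, Azuma-type concentration does not apply, and the truncation level $a$ must be balanced against the Laplace parameter through $2\lambda a<\alpha$---this dictates the precise order of limits in the statement and explains why the strictly stronger asymptotic $a\rho(a)\to 0$, rather than just $\rho(a)\to 0$, is the crucial input.
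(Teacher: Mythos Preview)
Your proof is correct and follows essentially the same route as the paper: both decompose $-\lambda(G_n^a-n\sigma_\mu^2)$ into the bracket deviation $-\lambda(\langle M_x\rangle_n-n\sigma_\mu^2)$, the truncation tail $T_n^a$, and the large-jump term $B_n$, handle the first via Proposition~\ref{prop.bracket.large.dev} and a level-set split, bound $T_n^a$ deterministically by $n\eta(a)$, and control $B_n$ via the i.i.d.\ domination $|\Delta_i M_x|\leq \kappa(X_i)+C$, combining with Cauchy--Schwarz. The only differences are organizational---you pull out $T_n^a$ pointwise before Cauchy--Schwarz rather than after, and you add a short Jensen argument for the matching lower bound $\geq 0$, which the paper's proof does not explicitly write out.
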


\begin{proof} 
Let $x\in \overline{X}^h$. The result will follow from Cauchy--Schwarz inequality and the following two estimates 
\begin{equation}\label{1bis}
\limsup_{\lambda\to 0^+}\frac{1}{\lambda} \limsup_{n\to +\infty}\frac{1}{n}\log {\E[\exp(-\lambda (\langle M_x\rangle_n-n\sigma_{\mu}^2))]}\leq 0.
\end{equation}
and 
\begin{equation}\label{backtoidd2}
\limsup_{a\to +\infty} \limsup_{\lambda \to 0^+}\frac{1}{\lambda} \limsup_{n\to \infty} \frac{1}{n} \log \E[\exp(\lambda (  \langle M_x \rangle_n - G_n^a))]\leq 0
\end{equation}
\textbf(i) We start by proving \eqref{1bis}. 
To ease the notation, let $Y_n=\langle M_x\rangle_n-n\sigma_{\mu}^2$. Let $\epsilon>0$. By Proposition \ref{prop.bracket.large.dev}, there exists $\alpha(\epsilon)>0$ and $n_0(\epsilon)\in \N$ such that for every $n\geq n_0(\epsilon)$, 
$$\p(Y_n\leq -n\epsilon)\leq \exp(-\alpha(\epsilon) n).$$
Noticing that $Y_n\geq - n\sigma^2_\mu$, we write 
$$
Y_n=Y_n\mathds{1}_{Y_n\geq 0}+ 
Y_n\mathds{1}_{-n\epsilon \leq Y_n\leq 0}
+ Y_n\mathds{1}_{-n\sigma^2 \leq Y_n\leq -n\epsilon}. 
$$
Since $\E[\exp(-\lambda Y_n) \mathds{1}_{-n\epsilon \leq Y_n\leq 0}]\leq \exp(n\lambda \epsilon)$ and, for $n\geq n_0(\epsilon)$,  
$$
\E[\exp(-\lambda Y_n)\mathds{1}_{-n\sigma^2 \leq Y_n\leq -n\epsilon}]\leq \exp(n\lambda \sigma^2)\p(Y_n\leq -n\epsilon)\leq \exp(n (\sigma^2\lambda-\alpha(\epsilon))),
$$
we get that  for every $n\geq n_0(\epsilon)$,
$$
\E[\exp(-\lambda(\langle M\rangle_n - n\sigma^2))]\leq 1+ \exp(n\lambda \epsilon)+\exp( n\sigma^2 \lambda-\alpha(\epsilon)n).
$$
Keeping $\epsilon$ and $\lambda>0$ (small enough) fixed and we let $n\to +\infty$ and deduce that 
$$
\limsup_{n\to +\infty}
\frac{1}{n}\log \E[\exp(-\lambda(\langle M_x\rangle_n - n\sigma^2))]\leq \max\{\lambda \epsilon, \lambda \sigma^2-\alpha(\epsilon)\}.
$$
Since $\alpha(\epsilon)>0$, we get by letting $\lambda\to 0^+$ (while keeping $
\epsilon$ fixed) that 
$$
\limsup_{\lambda\to 0^+}\frac{1}{\lambda}\limsup_{n\to +\infty}
\frac{1}{n}\log \E[\exp(-\lambda(\langle M_x\rangle_n - n\sigma^2))]\leq \epsilon.
$$
Letting $\epsilon\to 0$, we conclude that 
$$
\limsup_{\lambda\to 0^+}\frac{1}{\lambda}\limsup_{n\to +\infty}
\frac{1}{n}\log \E[\exp(-\lambda(\langle M_x\rangle_n - n\sigma^2))]\leq 0.
$$
This shows \eqref{1bis}.\\
 
\textbf{(ii)} Finally, we show \eqref{backtoidd2}. Using the expression \eqref{transform.G} for $G_n^a$, we see that  
$$ \langle M_x \rangle_n - G_n^a = \sum_{i=1}^n{\E[(\Delta_i M_x)^2 \mathds{1}_{|\Delta_i M_x|>a} | \mathcal{F}_{i-1}] + a \sum_{i=1}^n{|\Delta_i M_x|} \mathds{1}_{|\Delta_i M_x|\geq a}}.$$
Observe  that by  the expression of  
our martingale difference \eqref{our.martingale.difference} and by the decomposition \eqref{cocycle.decomposition},
we have a.s.~ for every $i\in \N$, 
\begin{equation}\label{equation.upper.MDS}|\Delta_i M_x|= |\sigma_0(X_i, L_{i-1}\cdot x )-\ell_\mu|\leq 
\kappa(X_i)+\ell_\mu+||\psi||_{\infty}:=\zeta_i.\end{equation}
Since the $\zeta_i$'s have the same distribution,  
$$\E[(\Delta_i M_x)^2 \mathds{1}_{|\Delta_i M_x|>a} | \mathcal{F}_{i-1}]\leq \E(\zeta_1^2 \mathds{1}_{\zeta_1>a}):=h_{\mu}(a).$$
Observe that the constant $h_{\mu}(a)$ is independent of $i$. Since $\mu$ has finite second moment, we deduce that 
\begin{equation}\label{quedire1}\limsup_{a \to \infty}\limsup_{\lambda \to 0^+}\frac{1}{\lambda} \limsup_{n\to \infty} \frac{1}{n} \log \E[e^{\lambda \sum_{i=1}^n \mathbb{E}[(\Delta_i M_x)^2 \mathds{1}_{|\Delta_i M_x|>a} | \mathcal{F}_{i-1}]} ]\leq \limsup_{a\to \infty}{h_{\mu}(a)}=0.\end{equation}
On the other hand, for every $a>0$,  the random variables $a\zeta_i\mathds{1}_{|\zeta_i|>a}$ are i.i.d random variables. Denote by $\zeta_a$ their common  distribution and  by  $\Lambda_{\zeta_a}$ the  Laplace transform of $\zeta_a$. The latter is differentiable at $0$ for every $a>0$ as $\zeta_1$ has finite exponential moment (because $\mu$ has finite exponential moment). 
It follows that 
\begin{eqnarray}
&&\limsup_{a\to +\infty} \limsup_{\lambda \to 0^+}\frac{1}{\lambda} \limsup_{n\to \infty} \frac{1}{n} \log \E[e^{\lambda a \sum_{i=1}^n{|\Delta_i M_x| \mathds{1}_{|\Delta_i M_x|\geq a}}}]\nonumber\\ &\leq &
\limsup_{a\to +\infty} \limsup_{\lambda \to 0^+}\frac{1}{\lambda} \limsup_{n\to \infty} \frac{1}{n} \log \E[e^{\lambda  \sum_{i=1}^n{ a |\zeta_i| \mathds{1}_{|\zeta_i|\geq a}}}] = 
\limsup_{a\to +\infty} \Lambda_{\zeta_a}'(0).\label{link}\end{eqnarray} But $\Lambda_{\zeta_a}'(0)=a\E[|\zeta_1| \mathds{1}_{|\zeta_1|>a}]\leq \E[\zeta_1^2 \mathds{1}_{|\zeta_1|>a}]\underset{a\to \infty}{\to} 0$. This concludes the proof of \eqref{backtoidd2} and hence the corollary. 
\end{proof}

\section{Proof of the main result}\label{sec.proof}
Having established the submartingale transform from \S \ref{sec.martingale} and the exponential decay of large deviation probabilities of the predictable quadratic variation from \S \ref{sec.large.deviation}, we are now ready to give the proofs of Theorem \ref{thm.main} and Corollary \ref{corol.rate.function}. In fact, we will prove a slightly more general version given by Theorem \ref{thm.main.tech} below.

\subsection{Statement of the main result}
To state the more general version of Theorem \ref{thm.main}, we recall and introduce some notation. We are given a geodesic Gromov-hyperbolic space $X$ with a fixed based point $o \in X$. The Busemann cocycle $\sigma: \Isom(X) \times \overline{X}^h \to \R$ with respect to the base point $o$ is as defined in \S \ref{subsec.hyperbolic}. Given a countably supported probability measure $\mu$ on $\Isom(X)$ and $x \in \overline{X}^h$, we define the upper $\Lambda_x^+$ and lower $\Lambda_x^-$ limit Laplace transforms as
$$
\Lambda_x^+(\lambda):=\limsup_{n \to \infty} \frac{1}{n} \log \mathbb{E}[e^{\lambda\sigma(L_n,x)}],
$$
and
$$
\Lambda_x^-(\lambda):=\liminf_{n \to \infty} \frac{1}{n} \log \mathbb{E}[e^{\lambda\sigma(L_n,x)}].
$$

Whenever $\mu$ has finite exponential moment both functions have values in $\R$ in a neighborhood of $0 \in \R$.

We will omit sub/super-scripts when $x\in X$, indeed, for every $x \in X$, we have $\Lambda_x^+ \equiv \Lambda_x^-\equiv \Lambda_o$ (since $|\sigma(g,x)-\sigma(g,y)|\leq 2 d(x,y)$ for $g\in G$ and $x,y\in X$). This common function $\Lambda=\Lambda_o$ is the notation used in Theorem \ref{thm.main} where we work with the basepoint $x=o \in X$.

\begin{theorem}\label{thm.main.tech}
Let $(X,d)$ be a separable geodesic Gromov-hyperbolic space and $\mu$ a non-elementary probability measure on $\Isom(X)$. Suppose that $\mu$ has a finite exponential moment.  Then for every $x\in \overline{X}^h$, 
$$\lim_{\lambda \to 0} \frac{\Lambda_x^-(\lambda)-\ell \lambda }{\lambda^2}=
\lim_{\lambda \to 0} \frac{\Lambda_x^+(\lambda)-\ell \lambda }{\lambda^2}=\frac{\sigma^2_{\mu}}{2}.$$  
\end{theorem}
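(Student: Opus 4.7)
The plan is to combine three inputs from the preceding sections: the Benoist--Quint martingale decomposition (Lemma \ref{lemma.martingale.dec}), the exponential submartingale transform (Proposition \ref{prop.submart.trans}), and the large deviation estimates for the predictable quadratic variation (Proposition \ref{prop.bracket.large.dev} and Corollary \ref{corollary.laplace.bracket}). Lemma \ref{lemma.martingale.dec} yields $\sigma(L_n, x) = M_{x,n} + n\ell_\mu + O(1)$ with uniformly bounded error, so the $O(1)$ term contributes only $O(1/n)$ to $\frac{1}{n}\log \mathbb{E}[e^{\lambda \sigma(L_n, x)}]$ for each fixed $\lambda$. The theorem therefore reduces to showing, as $\lambda \to 0$, that
\[
\limsup_n \tfrac{1}{n}\log \mathbb{E}[e^{\lambda M_{x,n}}] \leq \tfrac{\lambda^2 \sigma_\mu^2}{2} + o(\lambda^2) \quad\textrm{and}\quad \liminf_n \tfrac{1}{n}\log \mathbb{E}[e^{\lambda M_{x,n}}] \geq \tfrac{\lambda^2 \sigma_\mu^2}{2} + o(\lambda^2).
\]
Since $-M_x$ is again a martingale and $G_n^a$ is invariant under $M_x \mapsto -M_x$, it suffices to treat $\lambda > 0$ in both bounds.

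For the lower bound, I would apply Proposition \ref{prop.submart.trans} to obtain $\mathbb{E}\bigl[\exp\bigl(\lambda M_{x,n} - \tfrac{\mathfrak{f}(\lambda a)}{a^2}G_n^a\bigr)\bigr] \geq 1$ for small $\lambda > 0$. Splitting $G_n^a = (G_n^a - n\sigma_\mu^2) + n\sigma_\mu^2$ inside the exponential and applying H\"older's inequality with conjugate exponents $p, q > 1$ gives
\[
\tfrac{1}{p}\log \mathbb{E}[e^{p\lambda M_{x,n}}] \geq \tfrac{\mathfrak{f}(\lambda a)}{a^2}\, n\sigma_\mu^2 - \tfrac{1}{q}\log \mathbb{E}\bigl[\exp\bigl(-q\tfrac{\mathfrak{f}(\lambda a)}{a^2}(G_n^a - n\sigma_\mu^2)\bigr)\bigr].
\]
Setting $\theta = p\lambda$ and using $\mathfrak{f}(\theta a/p)/a^2 \sim \theta^2/(2p^2)$ as $\theta \to 0$, after taking $\liminf_n \frac{1}{n}(\cdot)$ the first term contributes $\sigma_\mu^2/(2p)$ to $(\Lambda_x^-(\theta) - \theta\ell_\mu)/\theta^2$, while Corollary \ref{corollary.laplace.bracket} makes the second term negligible in the iterated limit $\theta \to 0$ then $a \to \infty$. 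Sending $p \to 1^+$ last yields the lower bound $\sigma_\mu^2/2$.

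For the upper bound, I would split $\Delta_i M_x = \tilde X_i + Y_i$ where $\tilde X_i = \Delta_i M_x \mathds{1}_{|\Delta_i M_x| \leq a} - \mathbb{E}[\Delta_i M_x \mathds{1}_{|\Delta_i M_x| \leq a}\mid \mathcal{F}_{i-1}]$ is centered and bounded by $2a$ while $Y_i$ is the complementary centered tail difference, inducing martingales $\tilde M_n, R_n$ with $M_{x,n} = \tilde M_n + R_n$. H\"older's inequality then gives $\mathbb{E}[e^{\lambda M_{x,n}}] \leq \mathbb{E}[e^{p\lambda \tilde M_n}]^{1/p}\mathbb{E}[e^{q\lambda R_n}]^{1/q}$. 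For $\tilde M_n$, iterating the classical Bennett inequality for bounded centered increments conditionally gives
\[
\mathbb{E}[e^{p\lambda \tilde M_n}] \leq \mathbb{E}\Bigl[\exp\Bigl(\tfrac{e^{2p\lambda a}-1-2p\lambda a}{4a^2} \langle \tilde M\rangle_n\Bigr)\Bigr];
\]
using $\langle \tilde M\rangle_n \leq \langle M_x\rangle_n$ and the exponential concentration of $\langle M_x\rangle_n/n$ around $\sigma_\mu^2$ from Proposition \ref{prop.bracket.large.dev}, one replaces $\langle \tilde M\rangle_n$ by $n(\sigma_\mu^2 + \epsilon)$ up to an exponentially small error, contributing at most $p(\sigma_\mu^2+\epsilon)/2$ after dividing by $\lambda^2$. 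For $R_n$, the pointwise inequality $e^u \leq 1 + u + \tfrac{u^2}{2}e^{|u|}$ combined with the domination $|\Delta_i M_x| \leq \zeta_i$ from \eqref{equation.upper.MDS} and the finite exponential moment of $\mu$ yield $\mathbb{E}[e^{q\lambda Y_i}\mid\mathcal{F}_{i-1}] \leq \exp((q\lambda)^2 C(q\lambda, a))$ with $C(q\lambda, a) \to 0$ as $a \to \infty$ uniformly for small $\lambda$; hence after dividing by $\lambda^2$ the $R_n$ contribution vanishes as $a \to \infty$. Sending $\lambda \to 0$, then $a \to \infty$, then $\epsilon \to 0^+$ and $p \to 1^+$ yields the upper bound $\sigma_\mu^2/2$. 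The main obstacle is this upper bound: the three parameters $\lambda, a, p$ interact nontrivially via the Bennett estimate, the quadratic variation concentration, and the tail martingale bound, and the limits must be taken in the proper order for all error terms to combine to a genuine $o(\lambda^2)$.
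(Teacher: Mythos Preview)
Your lower bound argument is exactly the paper's proof of Proposition~\ref{prop.lower.bound}: the submartingale transform of Proposition~\ref{prop.submart.trans}, H\"older with exponents $p,q$, Corollary~\ref{corollary.laplace.bracket} to kill the $G_n^a$ term, and the limits $n\to\infty$, $\lambda\to 0$, $a\to\infty$, $p\to 1$ in that order. Nothing to add there.

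For the upper bound you take a genuinely different route from the paper, and there is a concrete gap in it. The displayed inequality
\[
\mathbb{E}\bigl[e^{p\lambda \tilde M_n}\bigr] \;\leq\; \mathbb{E}\Bigl[\exp\Bigl(\tfrac{e^{2p\lambda a}-1-2p\lambda a}{4a^2}\,\langle \tilde M\rangle_n\Bigr)\Bigr]
\]
does \emph{not} follow by ``iterating the Bennett inequality conditionally.'' The conditional Bennett bound gives $\mathbb{E}[e^{p\lambda\tilde X_i}\mid\mathcal{F}_{i-1}]\leq\exp(cV_i)$ with $V_i=\mathbb{E}[\tilde X_i^2\mid\mathcal{F}_{i-1}]$, and the only thing that iterates is the supermartingale property $\mathbb{E}[\exp(p\lambda\tilde M_n - c\langle\tilde M\rangle_n)]\leq 1$; once you try to peel off one increment from $\mathbb{E}[e^{p\lambda\tilde M_{n-1}}e^{cV_n}]$, the factor $e^{cV_n}$ is $\mathcal{F}_{n-1}$-measurable but not $\mathcal{F}_{n-2}$-measurable, and the recursion stalls. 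Your strategy is repairable in this setting because $\langle M_x\rangle_n\leq n\,\mathbb{E}[\zeta_1^2]$ almost surely by \eqref{equation.upper.MDS}: starting from the supermartingale bound, split on the event $\{\langle M_x\rangle_n\leq n(\sigma_\mu^2+\epsilon)\}$, use that bound on the good event, and Cauchy--Schwarz plus Proposition~\ref{prop.bracket.large.dev} plus the crude a.s.\ bound on the bad event. Your tail term $R_n$ is handled correctly.

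The paper's own upper bound avoids all of this. It exploits the special feature that the martingale comes from an i.i.d.\ walk, so $\mathbb{E}[e^{\lambda\Delta_n M_x}\mid\mathcal{F}_{n-1}]=\int e^{\lambda(\sigma_0(g,\xi)-\ell_\mu)}\,d\mu(g)$ for a point $\xi\in\overline{X}^h$. A direct Taylor expansion in~$\lambda$ bounds this by the \emph{deterministic} quantity $\exp(\tfrac{\lambda^2}{2}(v(\mu)+\epsilon))$, where $v(\mu)=\sup_\xi\mathbb{E}[M_{\xi,1}^2]$ (Lemma~\ref{lemma.martingale.condition}); this iterates trivially to $\mathbb{E}[e^{\lambda M_{x,n}}]\leq\exp(\tfrac{n\lambda^2}{2}(v(\mu)+\epsilon))$, yielding the bound $v(\mu)/2$ instead of $\sigma_\mu^2/2$. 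The gap between $v(\mu)$ and $\sigma_\mu^2$ is then closed by an acceleration trick: apply the same bound to $\mu^{\ast k}$, use $\Lambda(\mu^{\ast k},\cdot)=k\Lambda(\cdot)$, and invoke the uniform convergence of Lemma~\ref{lemma.unif.conv} to get $v(\mu^{\ast k})/k\to\sigma_\mu^2$. This bypasses truncation, the Bennett supermartingale, the extra H\"older split, and the delicate limit ordering you flag as the main obstacle; your route, once patched, is more general-purpose (it would work for martingales without the i.i.d.\ structure) but is considerably heavier here.
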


Note that this result is a more general version of Theorem \ref{thm.main} from introduction except for the separability assumption on $X$. However, Theorem \ref{thm.main} follows from it since, thanks to \cite[Remark 4]{gruber-sisto-tessera}, we can replace $X$ in Theorem \ref{thm.main} with a separable geodesic subset $X'$ invariant under the action of the group generated by the support of $\mu$ and simply apply Theorem \ref{thm.main.tech} with $X'$ for some $x \in X'$. This then implies Theorem \ref{thm.main} without separability assumption as claimed.

\bigskip

Subsections \S \ref{subsec.lower.bound} and \S \ref{subsec.upper.bound} are devoted to the proof of Theorem \ref{thm.main.tech}. 

\subsection{Proof of the lower bound}\label{subsec.lower.bound}
 
Here we prove the following.
\begin{proposition}\label{prop.lower.bound}
Keep the setting of Theorem \ref{thm.main.tech}. Then, for every $x\in \overline{X}^h$ $$\lim_{\lambda \to 0} \frac{\Lambda_x^-(\lambda)-\ell_\mu \lambda }{\lambda^2} \geq \frac{\sigma^2_{\mu}}{2} .$$ 
\end{proposition}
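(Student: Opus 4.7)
The strategy is to combine the Benoist--Quint martingale decomposition (Lemma \ref{lemma.martingale.dec}) with the submartingale transform (Proposition \ref{prop.submart.trans}) and the large deviation estimate for the predictable quadratic variation (Corollary \ref{corollary.laplace.bracket}). As a first reduction, since $\sigma(L_n,x)-n\ell_\mu-M_{x,n}$ is bounded uniformly in $n$ and $x$, the quantity $\Lambda_x^{-}(\lambda)-\lambda \ell_\mu$ coincides with $\liminf_{n}\frac{1}{n}\log \E[e^{\lambda M_{x,n}}]$, so it suffices to produce a lower bound of this log-Laplace transform of the martingale $M_x$.

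The key input is the submartingale transform. Since $|\Delta_k M_x|\leq \kappa(X_k)+\ell_\mu+\|\psi\|_\infty$ and the $X_k$ are i.i.d.\ with finite exponential moment, the hypothesis of Proposition \ref{prop.submart.trans} is met. Hence, applying it to $M_x$ for $\lambda>0$ small and to $-M_x$ for $\lambda<0$ small (noting $G_n^a$ is unchanged under $M\mapsto -M$), and evaluating at $n=0$ where the transformed process equals $1$, we obtain, for every $a>0$ and $\lambda\in\R$ with $|\lambda|$ sufficiently small,
\[
\E\bigl[\exp\bigl(\lambda M_{x,n} - C_{\lambda,a}\, G_n^a\bigr)\bigr]\geq 1,
\qquad\text{where } C_{\lambda,a}:=\tfrac{\mathfrak{f}(|\lambda| a)}{a^2}>0.
\]

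Next, I split this expectation by Hölder's inequality with conjugate exponents $p,q>1$ with $\tfrac{1}{p}+\tfrac{1}{q}=1$ to separate the martingale contribution from the quadratic variation correction:
\[
1\leq \E\bigl[e^{p\lambda M_{x,n}}\bigr]^{1/p}\cdot \E\bigl[e^{-q C_{\lambda,a}\,G_n^a}\bigr]^{1/q}.
\]
Rearranging and taking $\frac{1}{n}\log$, one gets
\[
\tfrac{1}{n}\log\E[e^{p\lambda M_{x,n}}]\;\geq\; -\tfrac{p}{q}\,\tfrac{1}{n}\log \E\bigl[e^{-qC_{\lambda,a} G_n^a}\bigr].
\]
Now, for any fixed $p,q$ and any $\epsilon>0$, Corollary \ref{corollary.laplace.bracket} lets me choose $a$ large enough so that, for $|\lambda|$ small enough (noting $qC_{\lambda,a}>0$ is small when $|\lambda|$ is small), the $\limsup_n$ of the right hand side factor is bounded by $qC_{\lambda,a}(-\sigma_\mu^2+\epsilon)$. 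This yields
\[
\liminf_n \tfrac{1}{n}\log\E[e^{p\lambda M_{x,n}}]\;\geq\; p\,C_{\lambda,a}\,(\sigma_\mu^2-\epsilon).
\]

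Finally, I change variables $\tilde\lambda:=p\lambda$ and exploit the asymptotics $C_{|\tilde\lambda|/p,a}\sim \tilde\lambda^2/(2p^2)$ as $\tilde\lambda\to 0$ (valid for each fixed $a$), giving
\[
\frac{\Lambda_x^{-}(\tilde\lambda)-\ell_\mu\tilde\lambda}{\tilde\lambda^2}\;\geq\; \frac{pC_{|\tilde\lambda|/p,a}}{\tilde\lambda^2}(\sigma_\mu^2-\epsilon)\;\xrightarrow[\tilde\lambda\to 0]{}\;\frac{\sigma_\mu^2-\epsilon}{2p}.
\]
Sending first $p\to 1^+$ (so that $q\to\infty$), then $\epsilon\to 0$ and finally (implicitly) $a\to\infty$ concludes the proof. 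The main technical delicacy is the order of the five limits ($n\to\infty$, $|\lambda|\to 0$, $p\to 1^+$, $\epsilon\to 0$, $a\to\infty$) and the fact that one must use Hölder with $p$ close to $1$ rather than Cauchy--Schwarz (which would yield only $\sigma_\mu^2/4$): this is exactly where the constant $\tfrac12$ in the Taylor coefficient is captured.
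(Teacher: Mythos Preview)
Your proposal is correct and follows essentially the same route as the paper: reduce to the martingale via Lemma~\ref{lemma.martingale.dec}, apply the submartingale transform (Proposition~\ref{prop.submart.trans}) to $M_x$ (and $-M_x$ for negative $\lambda$), split with H\"older using exponents $(p,q)$, invoke Corollary~\ref{corollary.laplace.bracket} on the $G_n^a$-factor, and finally let $p\to 1^+$ to recover the sharp constant $\sigma_\mu^2/2$. The paper organizes the limits slightly differently (it adds and subtracts $\frac{\mathfrak{f}(a\lambda)}{(a\lambda)^2}\sigma_\mu^2$ and sends $n\to\infty$, $\lambda\to 0^+$, $a\to\infty$, $p\to 1$ in that order), but the substance is identical, including your correct observation that Cauchy--Schwarz would only give $\sigma_\mu^2/4$.
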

 
\begin{proof}[Proof of Proposition \ref{prop.lower.bound}]

Given a probability measure $\mu$ as in the statement and $x \in \overline{X}^h$, let $M_{x}$ be the martingale given by Lemma \ref{lemma.martingale.dec}. It satisfies
$$
\sigma(L_n,x)-n\ell_\mu=M_{x,n}+O_{x,n}(1),
$$
for every $n \in \N$, where $O_{x,n}(1)$ is a random variable that is bounded (in absolute value) uniformly in $x \in \overline{X}^h$ and $n\in \N$. Let  $\sigma_\mu^2>0$ be as defined in \eqref{def.variance}. Let $x \in \overline{X}^h$ be fixed for the rest of the proof. For every  $\lambda \in \R$, we have
\begin{eqnarray*}\label{observe}\Lambda_x^-(\lambda)-\lambda \ell_{\mu}  & = & \liminf_{n \to \infty}\frac{1}{n}\log \E[e^{\lambda (\sigma(L_n,x) - n\ell_{\mu})}]\nonumber\\
& = &\liminf_{n \to \infty} \frac{1}{n}\log \E[e^{\lambda( M_{x,n} +O_{x,n}(1))}]\\
& = &\liminf_{n \to \infty} \frac{1}{n}\log \E[e^{\lambda M_{x,n}}], \end{eqnarray*}
where we used the fact that the random variables $O_{x,n}(1)$ are bounded below and above uniformly in $n \in \N$. Notice that since $\mu$ has finite exponential moment, for every $\lambda$ in a neighborhood of $0 \in \R$ (independent of $x \in \overline{X}^h$), the last quantity in the above displayed equation is finite.


We first prove that 
\begin{equation}\label{eq.main0}
\frac{\sigma_{\mu}^2}{2}\leq \liminf_{\lambda\to 0^+}{\frac{\Lambda_x^-(\lambda)-\lambda \ell_{\mu}}{\lambda^2}}.
\end{equation}

Let $n\in \N$, $a>0$, and $\lambda>0$ small enough. By Proposition \ref{prop.submart.trans}, we have  
$$
1\leq \E[\exp(\lambda M_{x,n} - \frac{\mathfrak{f}(a\lambda)}{a^2} G_n^a)],
$$
where, we recall, $G_n^a=\sum_{i=1}^n \mathbb{E}[(\Delta_i M_x)^2 1_{|\Delta_i M_x| \leq a}| \mathcal{F}_{i-1}]-a\sum_{i=1}^n |\Delta_iM_x| 1_{|\Delta_i M_x| \geq a}$.
Let $p> 1$. By H\"{o}lder inequality, we get 
$$
1\leq 
\E[\exp(p\lambda M_{x,n})]^{1/p} 
\E[\exp(- q\frac{\mathfrak{f}(a\lambda)}{a^2}G_n^a)]^{1/q},
$$
where $q \geq 1$ satisfies $1/p+1/q=1$.   

Taking logarithm and dividing by $n\lambda^2$, adding and subtracting the term $\frac{\mathfrak{f}(a\lambda)}{(a\lambda)^2}\sigma_{\mu}^2$ gives 
\begin{equation}\label{eq.main1}
0\leq \frac{1}{n p \lambda^2}\log \E[\exp( p\lambda M_{x,n})]+ \frac{1}{nq \lambda^2}\log \E[\exp(- q\frac{\mathfrak{f}(a\lambda)}{a^2}  (G_n^a-n\sigma_{\mu}^2))]-\frac{\mathfrak{f}(a\lambda)}{(a\lambda)^2}\sigma_{\mu}^2.
\end{equation}

Using the elementary fact $\liminf_{n\to +\infty}(a_n+b_n)\leq \liminf_n a_n + \limsup_n b_n$  for real sequences $a_n$ and $b_n$, letting $n \to \infty$ in \eqref{eq.main1}, we get 
$$
0\leq  p \frac{\Lambda_x^-(\lambda p)- \lambda p\ell_{\mu}}{(\lambda p)^2} +  \frac{1}{ q\lambda^2}\limsup_{n\to +\infty} \frac{1}{n}\log \E[\exp(- q\frac{\mathfrak{f}(a\lambda)}{a^2}  (G_n^a-n\sigma_{\mu}^2))]- \frac{\mathfrak{f}(a\lambda)}{(a\lambda)^2}\sigma_{\mu}^2.
$$
Letting $\lambda\to 0^+$ while noting that  $\mathfrak{f}(a\lambda) \underset{\lambda\to 0}{\sim} (a\lambda)^2/2$ and in particular $\eta=q\frac{\mathfrak{f}(a\lambda)}{a^2} \underset{\lambda \to 0}{\to} 0^+$, we obtain:  
$$
\frac{\sigma_{\mu}^2}{2}\leq p \liminf_{\lambda\to 0^+}{\frac{\Lambda_x^-(\lambda)-\lambda \ell_{\mu}}{\lambda^2}}
+ \frac{1}{2}\limsup_{\eta\to 0^+}\frac{1}{\eta}
\limsup_{n\to +\infty}\frac{1}{n}\log\E[\exp(-\eta (G_n^a-n\sigma_{\mu}^2))].
$$
Letting $a \to \infty$, we deduce from Corollary \ref{corollary.laplace.bracket} that we have
$$
\frac{\sigma_\mu^2}{2} \leq p  \liminf_{\lambda\to 0^+}{\frac{\Lambda_x^-(\lambda)-\lambda \ell_{\mu}}{\lambda^2}}
$$
The desired inequality \eqref{eq.main0} is now proved by taking $p \to 1$.

The inequality 
\begin{equation}\label{eq.main}
\frac{\sigma_{\mu}^2}{2}\leq \liminf_{\lambda\to 0^-}{\frac{\Lambda_x^-(\lambda)-\lambda \ell_{\mu}}{\lambda^2}} 
\end{equation}
is proven in precisely the same way replacing the martingale $M_{x}$ by the martingale $-M_{x}$ using the fact that both martingales have same transforms $G^a$. This completes the proof of Proposition \ref{prop.lower.bound}.
\end{proof}

\subsection{Proof of the upper bound} \label{subsec.upper.bound}
Here we prove the following.
\begin{proposition}\label{prop.upper.bound}
Keep the setting of Theorem \ref{thm.main.tech}. Then, for every $x \in \overline{X}^h$ \begin{equation}\label{eq.upper.bound}\lim_{\lambda \to 0} \frac{\Lambda_x^+(\lambda)-\ell_\mu \lambda }{\lambda^2} \leq \frac{\sigma^2_{\mu}}{2}.\end{equation}
\end{proposition}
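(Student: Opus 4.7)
As in the proof of the lower bound, the martingale decomposition from Lemma \ref{lemma.martingale.dec} and the uniform boundedness of the defects $O_{x,n}(1)$ reduce the problem to showing, for every $x \in \overline{X}^h$,
$$\limsup_{\lambda \to 0^+} \frac{1}{\lambda^2}\limsup_{n \to \infty} \frac{1}{n} \log \E[e^{\lambda M_{x,n}}] \leq \frac{\sigma_\mu^2}{2},$$
together with its counterpart for $\lambda \to 0^-$, which is obtained by running the same argument on $-M_x$ (both martingales have the same predictable quadratic variation).

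The overall strategy is an upper-bound analog of the proof in \S \ref{subsec.lower.bound}. Where that proof used the submartingale transform of Proposition \ref{prop.submart.trans} — an extension of Freedman's \emph{lower} exponential inequality to unbounded differences — the upper bound will rely on a \emph{supermartingale} counterpart: a Bennett--Bernstein-type exponential inequality for martingales with unbounded differences, in the spirit of Dzhaparidze--van Zanten \cite{DZ} and Fan--Grama--Liu \cite{fan.grama.liu}. Concretely, for every truncation level $a > 0$ and every sufficiently small $\lambda > 0$, one aims to produce a supermartingale of the form
$$\exp\Bigl(\lambda M_{x,n} - \tfrac{\mathfrak{g}(\lambda a)}{a^2}\, \widetilde{G}_n^a\Bigr),$$
where $\mathfrak{g}(t) = e^t - 1 - t$ and $\widetilde{G}_n^a$ is a modification of $G_n^a$ suited to the upper-bound direction, built from $\E[(\Delta_i M_x)^2 \mathds{1}_{|\Delta_i M_x| \leq a} \mid \mathcal{F}_{i-1}]$ plus a positive tail correction absorbing the large increments via the finite exponential moment of $\mu$.

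Granted this supermartingale property, the remaining argument parallels \S \ref{subsec.lower.bound} closely. One gets $\E[\exp(\lambda M_{x,n} - \tfrac{\mathfrak{g}(\lambda a)}{a^2}\widetilde{G}_n^a)] \leq 1$, and Hölder's inequality with conjugate exponents $p, q > 1$ together with the convexity of $\mathfrak{g}$ (to bound a residual factor by $1$ as in the lower bound) yields
$$\E[e^{\lambda M_{x,n}}] \leq \E\Bigl[\exp\bigl(q\tfrac{\mathfrak{g}(\lambda a)}{a^2}\widetilde{G}_n^a\bigr)\Bigr]^{1/q}.$$
Taking logarithms, dividing by $n\lambda^2$, and adding and subtracting $\tfrac{\mathfrak{g}(\lambda a)}{(\lambda a)^2}\sigma_\mu^2$ gives
$$\frac{1}{n\lambda^2}\log \E[e^{\lambda M_{x,n}}] \leq \tfrac{\mathfrak{g}(\lambda a)}{(\lambda a)^2}\sigma_\mu^2 + \frac{1}{qn\lambda^2}\log \E\Bigl[\exp\bigl(q\tfrac{\mathfrak{g}(\lambda a)}{a^2}(\widetilde{G}_n^a - n\sigma_\mu^2)\bigr)\Bigr].$$
Letting $n \to \infty$, then $\lambda \to 0^+$ (noting that $\eta := q\tfrac{\mathfrak{g}(\lambda a)}{a^2} \to 0^+$), then $a \to \infty$, and finally $q \to 1$, one uses a variant of Corollary \ref{corollary.laplace.bracket} adapted to $\widetilde{G}_n^a$ to kill the second term, while $\tfrac{\mathfrak{g}(\lambda a)}{(\lambda a)^2} \to \tfrac12$ delivers exactly $\tfrac{\sigma_\mu^2}{2}$ from the first.

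\textbf{Main obstacle.} The crux is the construction of the supermartingale with the right $\widetilde{G}_n^a$. Unlike in Proposition \ref{prop.submart.trans}, the sign constraints in the two-point case analysis now go the wrong way, and the contribution of large martingale differences must be absorbed into a \emph{positive} tail correction controlled by the finite exponential moment of $\mu$ rather than subtracted off. One then has to verify that this correction is asymptotically negligible on both fronts: (i) it must not spoil the Laplace concentration of $\widetilde{G}_n^a$ around $n\sigma_\mu^2$, which should follow by rerunning the arguments of \S \ref{subsub.bracket.large.dev} together with the bound \eqref{equation.upper.MDS} and the argument from \eqref{link} in Corollary \ref{corollary.laplace.bracket}; and (ii) under the iterated limit $n\to\infty$, $\lambda\to 0^+$, $a\to\infty$, $q \to 1$, it must disappear so that exactly the coefficient $\sigma_\mu^2/2$ is recovered, which is the reason for the double Hölder decoupling and the specific scaling $\tfrac{\mathfrak{g}(\lambda a)}{a^2}$.
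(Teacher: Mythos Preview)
Your plan takes a genuinely different route from the paper, and while it is plausible in principle (the supermartingale counterparts of Proposition~\ref{prop.submart.trans} do exist in \cite{DZ,fan.grama.liu}), it is considerably more delicate than what the paper actually does, and the key step is left as an acknowledged obstacle rather than carried out.

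The paper's proof avoids constructing any supermartingale transform. Instead it exploits the special i.i.d.\ structure of the martingale $M_x$: since $\Delta_n M_x=\sigma_0(X_n,L_{n-1}x)-\ell_\mu$ with $X_n$ independent of $\mathcal{F}_{n-1}$, the conditional law of $\Delta_n M_x$ given $\mathcal{F}_{n-1}$ is simply the law of $\sigma_0(X_1,\xi)-\ell_\mu$ for the (random) point $\xi=L_{n-1}x$. A direct Taylor expansion then gives a \emph{deterministic} bound $\E[e^{\lambda\Delta_n M_x}\mid\mathcal{F}_{n-1}]\le\exp\bigl(\tfrac{\lambda^2}{2}(v(\mu)+\epsilon)\bigr)$ for small $|\lambda|$, where $v(\mu)=\sup_{\xi\in\overline{X}^h}\E[M_{\xi,1}^2]$. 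Iterating via the tower property yields $\limsup_{\lambda\to 0}\frac{\Lambda_x^+(\lambda)-\ell_\mu\lambda}{\lambda^2}\le\frac{v(\mu)}{2}$. The constant $v(\mu)$ is then sharpened to $\sigma_\mu^2$ by an \emph{acceleration trick}: applying the same bound to $\mu^{\ast k}$ gives $\limsup\le\frac{v(\mu^{\ast k})}{2k}$, and Lemma~\ref{lemma.unif.conv} shows $\frac{v(\mu^{\ast k})}{k}\to\sigma_\mu^2$. This is the full argument; no Hölder decoupling, no tail-corrected compensator $\widetilde{G}_n^a$, and no upper-bound analogue of Corollary~\ref{corollary.laplace.bracket} is needed. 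The paper remarks explicitly that this route ``avoids using these more sophisticated results'' precisely because the i.i.d.\ structure gives a deterministic conditional bound.

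Regarding your plan itself, two points deserve care if you pursue it. First, the Hölder step as you wrote it is not quite right: from $\E[e^{\lambda M_n-\frac{\mathfrak{g}(\lambda a)}{a^2}\widetilde G_n^a}]\le 1$ you cannot directly conclude $\E[e^{\lambda M_n}]\le\E[e^{q\frac{\mathfrak{g}(\lambda a)}{a^2}\widetilde G_n^a}]^{1/q}$, because after Hölder the first factor becomes $\E[e^{p\lambda M_n-p\frac{\mathfrak{g}(\lambda a)}{a^2}\widetilde G_n^a}]^{1/p}$, and convexity of $\mathfrak g$ gives $\mathfrak g(p\lambda a)\ge p\,\mathfrak g(\lambda a)$, which goes the wrong way for bounding this by $1$. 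The fix is to invoke the supermartingale property at level $p\lambda$ and decompose $e^{\lambda M_n}=\bigl(e^{p\lambda M_n-\frac{\mathfrak g(p\lambda a)}{a^2}\widetilde G_n^a}\bigr)^{1/p}\cdot e^{\frac{1}{p}\frac{\mathfrak g(p\lambda a)}{a^2}\widetilde G_n^a}$; then the first Hölder factor is $\le 1$ and one recovers the bound with an extra factor $p$ in front of $\sigma_\mu^2/2$, removed by sending $p\to 1$. Second, the ``variant of Corollary~\ref{corollary.laplace.bracket}'' you need has the opposite sign in the exponent, so part~(i) of that proof must be rerun using the upper tail of Proposition~\ref{prop.bracket.large.dev} together with the a.s.\ bound $\langle M_x\rangle_n\le n\|\phi\|_\infty$ (from boundedness of $\phi$) in place of $\langle M_x\rangle_n\ge 0$.
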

The proof is based on showing that for large $n \in \N$ the random variable $\frac{1}{n}\sigma(L_n,x) -  \ell_\mu$ has a subgaussian behaviour in a neighborhood of $0$. This is shown in the following proposition which controls the limit Laplace transform of the sequence of random variables $ \frac{1}{n}\sigma(L_n,x) - \ell_\mu$. The proof is based on the martingale decomposition given in Lemma \ref{lemma.martingale.dec} and standard techniques for concentration results for martingales (in particular \cite[Theorem 2.19]{wainwright}). With the notation of \S \ref{sec.hyperbolic}, the main tool for the proof of Proposition \ref{prop.lower.bound} is the following. 
\begin{proposition}\label{proposition.laplace.control}
Let \begin{equation}\label{eq.vmu}
v(\mu):=\sup_{\xi\in \overline{X}^h}{\E\left[\left(\sigma_0(X_1,\xi)-\ell_\mu\right)^2 \right]}=\sup_{\xi\in \overline{X}^h}{\E[M_{\xi,1}^2]}.
\end{equation}
Then there exists  $C>0$ such that for every $\epsilon>0$, there exists $b>0$ such that for every $|\lambda|<\frac{v(\mu)}{b}$, every $n\in \N$ and every $x\in \overline{X}^h$, 
\begin{equation}\label{eq.laplace.control}\E\left[e^{\lambda (\sigma(L_n,x) - n \ell_\mu)}\right] \leq \exp\left(\frac{\lambda^2 (v(\mu)+\epsilon) n}{2}+ C |\lambda|\right).\end{equation}
\end{proposition}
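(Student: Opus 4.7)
The plan is to combine the Benoist--Quint martingale decomposition from Lemma~\ref{lemma.martingale.dec} with a one-step exponential moment bound on the martingale increments, in the spirit of standard Bernstein/Bennett arguments. By Lemma~\ref{lemma.martingale.dec} there is a constant $C>0$ (independent of $x$ and $n$) with $|\sigma(L_n,x)-n\ell_\mu-M_{x,n}|\leq C$, hence
$$\mathbb{E}\bigl[e^{\lambda(\sigma(L_n,x)-n\ell_\mu)}\bigr]\leq e^{C|\lambda|}\,\mathbb{E}\bigl[e^{\lambda M_{x,n}}\bigr],$$
so it suffices to prove the bound $\mathbb{E}[e^{\lambda M_{x,n}}]\leq \exp(\lambda^2(v(\mu)+\epsilon)n/2)$, uniformly in $x\in\overline{X}^h$ and $n\in\mathbb{N}$, for $|\lambda|$ small enough.

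Next, I would condition on $\mathcal{F}_{k-1}$ and use the pointwise inequality $e^{y}\leq 1+y+\tfrac{y^2}{2}e^{|y|}$. Since $\mathbb{E}[\Delta_k M_x\mid\mathcal{F}_{k-1}]=0$, this yields
$$\mathbb{E}\bigl[e^{\lambda\Delta_k M_x}\mid\mathcal{F}_{k-1}\bigr]\leq 1+\frac{\lambda^2}{2}\,\mathbb{E}\!\left[(\Delta_k M_x)^2 e^{|\lambda\Delta_k M_x|}\,\Big|\,\mathcal{F}_{k-1}\right].$$
The key uniform estimate is a control of the conditional quantity in the right-hand side. Write $\Delta_k M_x=\sigma_0(X_k,Z_{x,k-1})-\ell_\mu$ and use the decomposition \eqref{cocycle.decomposition} together with $|\sigma(g,x)|\leq \kappa(g)$ and the boundedness of $\psi$ on $\overline{X}^h$ (which is the essential input from \cite{aoun-sert}) to get the domination $|\Delta_k M_x|\leq \zeta_k:=\kappa(X_k)+\ell_\mu+2\|\psi\|_\infty$, where the $\zeta_k$ are i.i.d.\ with finite exponential moment. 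Splitting $e^{|\lambda\Delta_k M_x|}=1+(e^{|\lambda\Delta_k M_x|}-1)$ then gives, by independence of $X_k$ and $\mathcal{F}_{k-1}$ and the definition \eqref{eq.vmu},
$$\mathbb{E}\!\left[(\Delta_k M_x)^2 e^{|\lambda\Delta_k M_x|}\,\Big|\,\mathcal{F}_{k-1}\right]\leq v(\mu)+\mathbb{E}\!\left[\zeta_1^2\bigl(e^{|\lambda|\zeta_1}-1\bigr)\right].$$
The last expectation depends only on $|\lambda|$ (not on $x$ or $k$) and tends to $0$ as $\lambda\to 0$ by dominated convergence, since $\mu$ has finite exponential moment. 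Thus for any $\epsilon>0$ there is $b>0$ so that whenever $|\lambda|<v(\mu)/b$ the right-hand side is $\leq v(\mu)+\epsilon$, which combined with $1+u\leq e^u$ yields
$$\mathbb{E}\bigl[e^{\lambda\Delta_k M_x}\mid\mathcal{F}_{k-1}\bigr]\leq \exp\!\left(\frac{\lambda^2(v(\mu)+\epsilon)}{2}\right).$$

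Finally, iterating the tower property $n$ times gives $\mathbb{E}[e^{\lambda M_{x,n}}]\leq \exp(\lambda^2(v(\mu)+\epsilon)n/2)$, and combining with the first display yields the conclusion. The main difficulty, and the only point where the hyperbolic geometry really enters, is the uniformity in $x\in\overline{X}^h$ of the domination of $|\Delta_k M_x|$ by an i.i.d.\ sequence with finite exponential moment; this is exactly where one needs the extension of the Benoist--Quint coboundary $\psi$ to a bounded function on the entire horofunction compactification, as established in \cite{aoun-sert}. Once this is in place, the rest of the argument is a routine Bernstein-type conditioning chain.
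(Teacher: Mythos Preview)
Your proposal is correct and follows essentially the same approach as the paper. Both arguments reduce to the martingale via Lemma~\ref{lemma.martingale.dec}, establish a uniform one-step bound $\E[e^{\lambda\Delta_k M_x}\mid\mathcal{F}_{k-1}]\leq \exp(\lambda^2(v(\mu)+\epsilon)/2)$ using the domination $|\Delta_k M_x|\leq \kappa(X_k)+\ell_\mu+2\|\psi\|_\infty$, and then iterate by the tower property; the only cosmetic difference is that the paper expands $e^{\lambda(\sigma_0(g,\xi)-\ell_\mu)}$ as a power series and bounds the tail by $C\lambda^3$ (its Lemma~\ref{lemma.martingale.condition}), whereas you use the pointwise inequality $e^y\leq 1+y+\tfrac{y^2}{2}e^{|y|}$ together with the split $e^{|\lambda\Delta_k M_x|}=1+(e^{|\lambda\Delta_k M_x|}-1)$ to isolate $v(\mu)$.
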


This proposition will yield \eqref{eq.upper.bound} with $\sigma_\mu^2$ replaced by the larger quantity $v(\mu)$. To obtain \eqref{eq.upper.bound}, we will use an acceleration technique speeding up the random walk, see the proof of Proposition \ref{prop.upper.bound}. 

We now proceed with proving Proposition \ref{proposition.laplace.control}. The proof is based on the following control of the conditional expectation of the martingale difference $\Delta M_x$: 

\begin{lemma}\label{lemma.martingale.condition}
For every $\epsilon>0$, 
there exists a constant $b>0$ such that for every $|\lambda|<b$, $n\in \N$ and $x\in \overline{X}^h$, the following inequality holds almost surely: 
\begin{equation*}
\E\left[\exp(\lambda \Delta_n M_x)\,|\,\mathcal{F}_{n-1} \right]
\leq \exp\left(\frac{\lambda^2 (v(\mu)+\epsilon)}{2}\right).
\end{equation*}
\end{lemma}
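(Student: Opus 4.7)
The plan is to show, by Taylor expansion, that conditionally on $\mathcal{F}_{n-1}$ the moment generating function of the martingale difference $\Delta_n M_x = \sigma_0(X_n, Z_{x,n-1}) - \ell_\mu$ is bounded uniformly by essentially $\exp(\lambda^2 v(\mu)/2)$, with the remainder absorbed in an $\epsilon$. The three ingredients I will use are: (i) the martingale property, which gives $\E[\Delta_n M_x \mid \mathcal{F}_{n-1}] = 0$; (ii) the uniform second-moment bound coming straight from the definition of $v(\mu)$: since $X_n$ is independent of $\mathcal{F}_{n-1}$ and $Z_{x,n-1}$ is $\mathcal{F}_{n-1}$-measurable, one has $\E[(\Delta_n M_x)^2 \mid \mathcal{F}_{n-1}] = \E[(\sigma_0(X_1,\xi) - \ell_\mu)^2]\big|_{\xi = Z_{x,n-1}} \leq v(\mu)$; and (iii) the pathwise control $|\Delta_n M_x| \leq \zeta_n := \kappa(X_n) + \ell_\mu + \|\psi\|_\infty$ from \eqref{equation.upper.MDS}, where $\zeta_n$ depends only on $X_n$ and therefore is independent of $\mathcal{F}_{n-1}$, and has a finite exponential moment by the hypothesis on $\mu$.

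The central step is to apply the elementary inequality $\bigl|e^{y} - 1 - y - y^2/2\bigr| \leq \tfrac{|y|^3}{6} e^{|y|}$ with $y = \lambda \Delta_n M_x$. Taking conditional expectation on $\mathcal{F}_{n-1}$ and using (i) and (ii), I obtain
\[
\E\bigl[e^{\lambda \Delta_n M_x} \mid \mathcal{F}_{n-1}\bigr] \leq 1 + \tfrac{\lambda^2 v(\mu)}{2} + \tfrac{|\lambda|^3}{6}\, \E\bigl[|\Delta_n M_x|^3 e^{|\lambda||\Delta_n M_x|} \mid \mathcal{F}_{n-1}\bigr].
\]
By (iii) and the independence of $\zeta_n$ from $\mathcal{F}_{n-1}$, the remainder is dominated by the deterministic quantity $\tfrac{|\lambda|^3}{6}\, \E[\zeta_1^3 e^{|\lambda|\zeta_1}]$, which is finite whenever $|\lambda|$ is smaller than the exponential moment parameter of $\mu$ (use e.g.\ $|\lambda| < \alpha/2$) and which tends to $0$ as $\lambda \to 0$. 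Given $\epsilon > 0$, I can therefore choose $b > 0$ small enough so that $\tfrac{|\lambda|^3}{6}\, \E[\zeta_1^3 e^{|\lambda|\zeta_1}] \leq \tfrac{\lambda^2 \epsilon}{2}$ for all $|\lambda| < b$.

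Combining these gives $\E[e^{\lambda \Delta_n M_x} \mid \mathcal{F}_{n-1}] \leq 1 + \tfrac{\lambda^2(v(\mu)+\epsilon)}{2}$, and the desired statement follows from $1 + t \leq e^t$. The bound is uniform in $n$ and $x$ because both $v(\mu)$ and the distribution of $\zeta_1$ are independent of $n$ and $x$; this is exactly where the supremum in the definition \eqref{eq.vmu} of $v(\mu)$ is needed.

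The computation is essentially routine once one notices the decoupling in step (iii). The only mildly subtle point is that writing $\E[(\Delta_n M_x)^2 \mid \mathcal{F}_{n-1}]$ as a deterministic function evaluated at $Z_{x,n-1}$ and bounding it uniformly by $v(\mu)$ requires the independence of $X_n$ from $\mathcal{F}_{n-1}$ together with the measurability of $Z_{x,n-1}$ — a standard but crucial observation. No analytic perturbation or hyperbolicity input is used here beyond what is already packaged in the boundedness of $\psi$ from the Benoist--Quint decomposition.
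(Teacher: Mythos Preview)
Your proposal is correct and follows essentially the same approach as the paper: reduce the conditional expectation to an unconditional one via the independence of $X_n$ from $\mathcal{F}_{n-1}$, Taylor-expand the exponential, use the mean-zero property to kill the linear term, bound the quadratic term by $v(\mu)$, and control the cubic (and higher) remainder via the pathwise domination $|\Delta_n M_x|\leq \kappa(X_n)+\ell_\mu+2\|\psi\|_\infty$ together with the exponential moment of $\mu$. The only cosmetic difference is that the paper writes out the full Taylor series and bounds its tail, whereas you use the single Lagrange-type remainder inequality $|e^y-1-y-y^2/2|\leq \tfrac{|y|^3}{6}e^{|y|}$; both lead to the same $1+\tfrac{\lambda^2(v(\mu)+\epsilon)}{2}\leq e^{\lambda^2(v(\mu)+\epsilon)/2}$ conclusion.
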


\begin{proof}
By expanding the expression \eqref{equation.martingale} of the martingale $M_x$ and taking conditional expectation, it suffices to show that for every $\epsilon>0$, there exists a constant $b>0$ such that for every $\xi \in \overline{X}^h$ and $|\lambda|<b$
\begin{equation}\label{eq.b.lambda.epsilon}
\int e^{\lambda(\sigma_0(g,\xi)-\ell_\mu)} d\mu(g) \leq \exp\left(\frac{\lambda^2 (v(\mu)+\epsilon)}{2}\right).
\end{equation}
Using the exponential moment assumption on $\mu$, let $\alpha>0$ be such that $\int e^{\alpha \kappa(g) d\mu(g)}<\infty$. Thanks to \eqref{equation.martingale}, we have that for every $g \in \Isom(X)$ and $\xi \in \overline{X}^h$,  $|\sigma_0(g,x)-\ell_\mu| \leq \kappa(g) + 2(\|\psi\|+\ell_\mu)$. Therefore, for every $|\lambda|<\alpha$, using dominated convergence, we have
$$
\int e^{\lambda(\sigma_0(g,\xi)-\ell_\mu)} d\mu(g)=1+\frac{\lambda^2}{2}\E[(\sigma_0(X_1,\xi)-\ell_\mu)^2]+\sum_{k=3}\frac{\lambda^k}{k!}\E[(\sigma_0(X_1,\xi)-\ell_\mu)^k],
$$
where we have used the fact that $\sigma_0(X_1,\xi)-\ell_\mu=M_{\xi,1}$ has mean zero (as the cocycle $\sigma_0$ has constant drift). Now using again the fact that $|\sigma_0(g,x)-\ell_\mu| \leq \kappa(g) + 2(\|\psi\|+\ell_\mu)$ and that $\mu$ has finite exponential moment, we get that there exists  $C>0$ (independent of $\xi$) such that for every $|\lambda|<\alpha$ such that 
$$
\int e^{\lambda(\sigma_0(g,\xi)-\ell_\mu)} d\mu(g)\leq 1+\frac{\lambda^2 v(\mu)}{2}+ C \lambda^3.
$$
This readily implies \eqref{eq.b.lambda.epsilon} and hence finishes the proof.
\end{proof}

\begin{proof}[Proof of Proposition \ref{proposition.laplace.control}]
By the tower property of the conditional expectation, we deduce from Lemma  \ref{lemma.martingale.condition} that for every $\epsilon>0$,    $|\lambda|<b$ (where $b=b(\mu,\epsilon)$ is given by the aforementioned lemma),   every $n\in \N$ and $x\in \overline{X}^h$, 
$$\E[ e^{\lambda M_{x,n}}] 
= \E[e^{\lambda M_{x,n-1}} \E[e^{\lambda \Delta M_{x,n}} |\mathcal{F}_{n-1}]]
\leq \exp\left(\frac{\lambda^2(v(\mu)+\epsilon)}{2}\right) \E[e^{\lambda M_{x,n-1}}].$$
Iterating the same process, we deduce that 

$$\E[\exp(\lambda M_{x,n})]\leq \exp\left(\frac{ \lambda^2 (v(\mu)+\epsilon) n}{2}\right).$$
Finally, recall that $\sigma(L_n,x)-n\ell_\mu=M_{x,n}+R_{x,n}$ where $|R_{x,n}|\leq 2||\psi||_{\infty}:=C$. This finishes the proof of the proposition. 
\end{proof}

A remark on the proof Proposition \ref{proposition.laplace.control} is in order.

\begin{remark}
Given a martingale $M$ with unbounded differences, controlling various quantities involving the \emph{conditional expectation} of the martingale difference sequence $\Delta M$ is generally an important step to prove concentration results for the martingale $M$; see the works of de La Pe\~{n}a \cite{de.la.pena}, Dzhaparidze--van Zanten \cite{DZ}, Fan--Grama--Liu \cite{fan.grama.liu} and Liu--Watbled \cite{liu-watbled} who prove Bennett--Bernstein type concentration inequalities generalizing results of Freedman \cite{Freedman} to the case of unbounded differences. Proposition \ref{proposition.laplace.control} avoids using these more sophisticated results thanks to Lemma \ref{lemma.martingale.condition} which, exploiting the special form of our martingales (namely, coming from an iid random walk on a group), gives a deterministic bound for the exponential of the conditional expectation.
\end{remark}

We are now ready to give
\begin{proof}[Proof of Proposition \ref{prop.upper.bound}]
Using Proposition \ref{proposition.laplace.control}
and taking logarithm and dividing by $n$ on both sides of \eqref{eq.laplace.control}, letting first $n\to +\infty$, then $\lambda\to 0$, and finally $\epsilon\to 0$, we get that 
\begin{equation}\label{eq.upper.bound2} \limsup_{\lambda\to 0}
\frac{\Lambda_x^+(\lambda)-\lambda \ell_\mu}{\lambda^2}\leq \frac{v(\mu)}{2}. 
\end{equation}
This yields \eqref{eq.upper.bound} with $\sigma_{\mu}^2$ replaced with the larger quantity $v(\mu)$. We now employ an acceleration trick. More precisely, consider, for every $k\in \N$, the probability measure $\mu^{\ast k}$ (distribution of $L_k$), which is a non-elementary probability measure with finite exponential moment. Denote by $\Lambda(\mu^{\ast k},.)$ the Laplace transform based at $x=o$ for the $\mu^{\ast k}$-random walk $(L_{nk})_{n \in \N}$. In particular, $\Lambda(\mu,.)=\Lambda(.)$. Applying \eqref{eq.upper.bound2} for the
$\mu^{\ast k}$-random walk, we deduce that  for every $k\geq 1$, 
$$\limsup_{\lambda\to 0}\frac{\Lambda({\mu^{\ast k}},\lambda)- \lambda \ell_{\mu^{\ast k}}}{\lambda^2}\leq \frac{ v(\mu^{\ast k})}{2}
$$
Here, in straightforward way, we have $\ell_{\mu^{\ast k}}=k \ell_\mu$ 
and $\Lambda({\mu^{\ast k}},.)=k \Lambda(.)$. Hence, for every $k \geq 1$, 
$$
\limsup_{\lambda\to 0}\frac{\Lambda(\lambda)- \ell_\mu \lambda}{\lambda^2}\leq \frac{ v(\mu^{\ast k})}{2k}.
$$
It remains to check that 
\begin{equation}\label{equation.variance.limit}
\lim_{k\to \infty}{\frac{v(\mu^{\ast k})}{k}}=\sigma_{\mu}^2.\end{equation}
By definition of $v(\mu^{\ast k})$ given in \eqref{eq.vmu} and using \eqref{identity1} (with the notation of Lemma \ref{lemma.unif.conv}), we get that for every $k \geq 1$, 
$$\frac{v(\mu^{\ast k})}{k}=\sup_{\xi\in \overline{X}^h}f_k(\xi).$$ Finally, the uniform convergence given in Lemma \ref{lemma.unif.conv} for the sequence $(f_k)_{k\in \N}$ implies  \eqref{equation.variance.limit} and finishes the proof of the proposition.
\end{proof}

\subsection{Proof of Corollary \ref{corol.rate.function}}\label{subsec.corol}
If $\sigma_\mu=0$, it is easy to deduce from Remark \ref{rk.sigma.positive} that the rate function $I$ satisfies $I(\ell_\mu)=0$ and $I(x)=\infty$ for every $x \in \R \setminus \{\ell_\mu\}$ and hence Corollary \ref{corol.rate.function} is true in that case. We therefore suppose $\sigma^2_\mu>0$. To treat this case, we will use some standard terminology from convex analysis, for which we refer the reader to \cite{convexity.book}. Let, as usual, $\Lambda$ denote the limit Laplace transform of the sequence $\frac{1}{n} d(L_n \cdot o,o)$. Note that $\Lambda$ is convex (as it follows by a direct application of H\"{o}lder inequality), and, thanks to the finite exponential moment assumption, it takes finite values on an interval of type $(-\infty,\alpha)$ with $\alpha>0$ and hence it is continuous on this interval. Let $\Lambda^\ast$ be its Fenchel--Legendre transform. By Theorem \ref{thm.main} and \cite[Proposition 6.1.2]{convexity.book}, we have $\partial \Lambda^\ast(\ell_\mu)=\{0\}$ where $\partial \Lambda^\ast$ is the multi-valued subdifferential function of $\Lambda^{\ast}$. Moreover, by Theorem \ref{thm.main}, $\Lambda$ has a second-order development at $0$ and therefore, by \cite[Theorem 5.1.2]{convexity.book} its subdifferential $\partial \Lambda$ is differentiable in the sense of \cite[Definition 5.1.1]{convexity.book}. Since also $\sigma_\mu^2>0$, we can apply \cite[Proposition 6.2.5]{convexity.book} (see also \cite[Proposition 4.5]{gianluca}) and deduce that $\Lambda^\ast$ satisfies 
\begin{equation}\label{eq.expand.lambdastar}
\Lambda^{\ast}(\ell_\mu+ x)=\frac{1}{2\sigma_\mu^2}x + o(x^2)
\end{equation}
as $x \to 0$.
Now, let $\alpha>0$ be the constant appearing in the finite exponential moment condition, i.e.~ $\int e^{\alpha d(g \cdot o,o)} d\mu(g)<\infty$.  
Then, it follows by Varadhan's integral Lemma \cite[Theorem 4.3.1]{dembo-zeitouni} for every $\lambda<\alpha$, we have  $I^\ast(\lambda)=\Lambda(\lambda)$, where $I^\ast$ is the Fenchel--Legendre transform of $I$ and $\Lambda$. But since the second-order term $\sigma^2_\mu/2$ in the second-order expansion of $\Lambda$ at $0$ (given by Theorem \ref{thm.main}) is positive, it follows  that the Fenchel--Legendre transforms of $I^\ast$ and $\Lambda$ coincides in a neighborhood of $\ell_\mu$, i.e.
\begin{equation}\label{eq.Istarstar}
I^{\ast \ast}(x)=\Lambda^{\ast}(x)  
\end{equation}
for every $x \in (\ell_\mu-\beta,\ell_\mu+\beta)$ for some $\beta>0$. But since by \cite[Theorem 1.1]{BMSS}, the function $I$ is convex (and lower semi-continuous), thanks to Fenchel--Legendre duality, we have $I \equiv I^{\ast \ast}$ and hence the corollary follows from \eqref{eq.expand.lambdastar} and \eqref{eq.Istarstar}. \qed

\begin{remark}[On finite time large deviation estimates]
Corollary \ref{corol.rate.function} is an asymptotic statement obviously in its expression (as $\lambda \to 0$) but also concerning the rate function $I$ itself (which controls, from below and above, the exponential rate of decay of probabilities of large deviations of $\frac{1}{n}d(L_n \cdot o,o)-\ell_\mu$ as $n \to \infty$). In regard to giving upper bounds for the large deviation probabilities, Corollary \ref{corol.rate.function} parallels Proposition \ref{prop.upper.bound}. However, in the spirit of concentration estimates, as in the proof of Proposition \ref{prop.upper.bound}, we could have directly used Proposition \ref{proposition.laplace.control} together with the Chernoff bound, to obtain \textit{finite time estimates} for the large deviations of $\frac{1}{n}d(L_n \cdot o,o)-\ell_\mu$\footnote{This finite time estimates then can be used, with the acceleration trick, to prove $\lim_{\lambda \to 0}\frac{I(\ell_\mu+\lambda)}{\lambda^2} \geq \frac{1}{2\sigma_\mu^2}$}. This is in line with the recent work \cite{aoun-sert} where, under additional assumptions, the appearing constants are made explicit (e.g.~ relating with the spectral radius of the probability measure $\mu$ in the regular representation $L^2(G)$ of the isometry group $G$).
\end{remark}

\subsection{Concluding remarks and questions}\label{subsec.questions}
In this final part, we include two questions motivated by our results and and make some brief comments on them.
\subsubsection{Limit Laplace transform of the Busemann cocycle}
As a direct consequence of Theorem \ref{thm.main.tech}, we have that the functions $\Lambda_x^+$ and $\Lambda_x^-$ have the same derivatives at $0$ for every $x\in \overline{X}^h$. Moreover, it is not hard to see that $\Lambda_x^+=\Lambda_y^-$ on $[0,+\infty)$ for every $x,y \in \overline{X}^h$. These suggest the following questions:

\begin{question}\label{question}
Is it true that  $\Lambda_x^+=\Lambda_x^-$ for every $x\in \overline{X}^h$?  More importantly, does there exist a neighborhood of $0$ such that $\Lambda_x^+=\Lambda_y^+$ for every $x,y\in \overline{X}^h$ (and similarly $\Lambda_x^-=\Lambda_y^-$)?
\end{question}

The answer to Question \ref{question} is positive for $x,y \in \partial^h X$ in standard cases when an analytic approach can be implemented. These include random walks on free groups or on classical hyperbolic spaces $\mathbb{H}^n$. Regarding the last part of the question, we note that there are simple examples which show that one cannot ask that the functions $\Lambda_x$ and $\Lambda_y$ coincide throughout the region where they are finite/well-defined -- take for example the  random walk on the group $\mathrm{F}_2=\langle a, b\rangle$ driven by the measure $\mu=\frac{1}{2}(\delta_a+\delta_b)$ and consider $x=a^{+\infty} \in \partial \mathrm{F}_2$ and $y=a^{-\infty}\in \partial \mathrm{F}_2$.\\

\subsubsection{Second-order expansion below the drift without exponential moment}
The rate function $I$ appearing in \eqref{eq.ldp} for $\frac{1}{n}\kappa(L_n)$ exists without any moment assumption \cite[Theorem 2.8]{BMSS}. Moreover, if $\mu$ fails to have finite exponential moment, then the rate function $I$ vanishes on $[\ell_\mu,+\infty)$ (see \cite[Remark 3.2]{BMSS}). On the other hand, it follows from Gou\"{e}zel's \cite[Theorem 1.2]{gouezel.first.moment} that $I$ is positive on $[0,\ell_\mu)$ when $\mu$ has finite first moment. This suggests the following  question 

\begin{question}
Suppose $\mu$ is a non-elementary probability measure with finite second order moment. Is it true that 
$$\lim_{\lambda\to 0^-}\frac{I(\lambda+\ell_{\mu}) }{\lambda^2}=\frac{1}{2\sigma_\mu^2} ?$$
\end{question}
Moreover, we note that thanks to Benoist--Quint \cite[\S 5]{BQ.hyperbolic}, the definition of the variance $\sigma_\mu^2$ given in \eqref{def.variance} even makes sense under the finite first moment hypothesis supposing that the isometry group $\Isom(X)$ acts cocompactly on $X$. Therefore, this suggests the subsequent question as to whether the second-order term in the second-order expansion of $I$ below the drift vanishes when $\sigma_\mu^2=\infty$. Similar questions can be asked about the second-order expansion of the limit Laplace transform $\Lambda$ below zero. 

\end{document}